\theoremstyle{plain}
\newtheorem*{theoremA*}{Theorem A}
\newtheorem*{theoremB*}{Theorem B}
\newtheorem*{theorem*}{Theorem} 
\newtheorem*{lemma*}{Lemma}
\newtheorem*{assumption*}{Assumption}
\newtheorem{theorem}[equation]{Theorem} 
\newtheorem{lemma}[equation]{Lemma}
\theoremstyle{definition}
\newtheorem{definition}[equation]{Definition}
\newtheorem{example}[equation]{Example}
\newtheorem{remark}[equation]{Remark}
\newtheorem*{observation*}{Observation}
\theoremstyle{remark}
\numberwithin{equation}{subsection}
\numberwithin{equation}{section}
\newcommand{\R}{\mathbb{R}}
\newcommand{\C}{\mathbb{C}} 
\newcommand{\Z}{\mathbb{Z}}
\newcommand{\A}{{A}}
\newcommand{\Calg}{C^\ast\text{-algebra}}
\newcommand{\D}{\mathcal{D}}
\newcommand{\CAT}{\operatorname{CAT}}
\newcommand{\Mid}{\operatorname{Mid}}
\newcommand{\SAH}{\operatorname{SAH}}
\newcommand{\Compacts}{\mathfrak{K}}
\begin{document} 

\title[Baum--Connes Conjecture for Groups Acting on CAT(0)-Cubical Spaces]{On the Baum--Connes Conjecture for Groups Acting on CAT(0)-Cubical Spaces} 
\author{Jacek Brodzki}
\author{Erik Guentner}
\author{Nigel Higson}
\author{Shintaro Nishikawa}

\address{J.B.: Building 54, Mathematical Sciences, University of Southampton, Highfield, Southampton SO17 1BJ, England.}
\address{E.G.: Department of Mathematical Sciences,
University of Hawaii at Manoa,
2565 McCarthy Mall, Keller 401A,
Honolulu, HI 96822, USA.}
\address{N.H \& S.N.: Department of Mathematics, Penn State University, University Park, PA 16802, USA.} 

\begin{abstract}
We give a new proof of the Baum--Connes conjecture with coefficients
for any second countable, locally compact topological group that acts
properly and cocompactly on a finite-dimensional CAT(0)-cub\-ical space
with bounded geometry.  The proof uses the Julg-Valette complex of a
CAT(0)-cubical space introduced by the first three authors, and the
direct splitting method in Kasparov theory developed by the last
author. 
\end{abstract}

\thanks{J.B.\  was supported in part by EPSRC grant EP/N014189/1.} 
\thanks{E.G.\   was supported in part by   the Simons   Foundation    grant \#586754.}

\maketitle


\section{Introduction}

The computation of $K$-theory groups for crossed product $C^*$-algebras is a central problem in
C$^*$-algebra theory, the solution of which has deep applications to
manifold topology and to the algebra of group
rings.  The {Baum--Connes conjecture} proposes a general form of the solution.  It  asserts
that the \emph{Baum--Connes assembly map}
\begin{equation}
\label{eq-bc-assembly-map}
K_*^G (\underline{E}G; A) \longrightarrow K_*(C^*_r (G,A))
\end{equation}
is an isomorphism for any second countable, locally compact group $G$
and any separable coefficient $G$-$C^*$-algebra $A$. Here  $\underline{E}G$ is a
universal proper $G$-space and $C^*_r (G,A)$ is the reduced crossed
product.  See \cite{BCH94} for details and discussion.  

Our
launching point is the following theorem of Higson and
Kasparov \cite{HigsonKasparov,HK97}:

\begin{theorem*}[Higson, Kasparov]
   Let $G$ be a second countable, locally compact group.  If $G$
   admits an isometric and metrically proper action on a
   \textup{(}possibly infinite dimensional\textup{)}
   Euclidean space then the Baum--Connes assembly map
   \eqref{eq-bc-assembly-map} is an isomorphism for $G$ and any
   separable coefficient algebra $A$.
\end{theorem*}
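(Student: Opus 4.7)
The plan is to implement a Dirac/dual-Dirac argument in infinite dimensions, producing the identity $\gamma = 1$ in the Kasparov ring $KK^G(\mathbb{C},\mathbb{C})$; by the standard descent formalism this identity implies that the assembly map \eqref{eq-bc-assembly-map} is an isomorphism for every separable coefficient algebra $A$. The novel ingredient, compared with Kasparov's original Dirac/dual-Dirac argument for groups acting on symmetric spaces of nonpositive curvature, is that the Euclidean space $E$ here is allowed to be infinite dimensional, so one needs Bott periodicity in infinitely many variables.

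First I would associate to $E$ a graded $G$-$C^*$-algebra $\mathcal{A}(E)$ that plays the role of ``continuous sections vanishing at infinity of the Clifford algebra bundle over $E$''. Concretely, for a finite-dimensional affine subspace $V \subset E$ through a chosen base point, set $\mathcal{A}(V) = C_0(V, \operatorname{Cliff}(V))$; for $V \subset W$ use the finite-dimensional Bott element to produce a $*$-homomorphism $\mathcal{A}(V) \to \mathcal{A}(W)$, and define $\mathcal{A}(E)$ as the inductive limit along an exhausting net of such subspaces. The affine isometric action of $G$ on $E$ induces an action on $\mathcal{A}(E)$, and properness of the action will make this algebra behave, for $K$-theoretic purposes, like a proper $G$-$C^*$-algebra.

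Next, I would construct a Bott element $\beta \in KK^G(\mathbb{C}, \mathcal{A}(E))$ from the canonical unbounded ``Clifford multiplication by the position vector'' element of $\mathcal{A}(E)$, together with a Dirac element $\alpha \in KK^G(\mathcal{A}(E), \mathbb{C})$ whose underlying Hilbert space is a suitable inductive limit of $L^2$-sections of Clifford bundles and whose operator is an unbounded Dirac-type operator assembled from finite-dimensional pieces. Properness of the $G$-action on $E$ is precisely what ensures that the commutators of this operator with elements of $G$ (and of $\mathcal{A}(E)$) satisfy the compactness conditions required of a Kasparov module. The key identity is then $\beta \otimes_{\mathcal{A}(E)} \alpha = 1_{\mathbb{C}}$ in $KK^G(\mathbb{C},\mathbb{C})$; in finite dimensions this is classical Bott periodicity, and in the infinite-dimensional setting it will be established by a rescaling homotopy of the Dirac operator, reducing the computation to an increasing family of finite-dimensional models.

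The main obstacle is analytic. The algebra $\mathcal{A}(E)$ is highly non-separable and its $G$-action fails to be norm continuous, and the operator underlying $\alpha$ does not live on a single Hilbert space; so one must work with continuous fields of Hilbert spaces, carefully chosen exhausting subspaces, and uniform estimates on the rescaling homotopy that intertwine rotations of $E$ with the Bott element. Once $\gamma = 1$ is in hand, the Baum--Connes isomorphism with coefficients follows by the standard descent principle: Kasparov descent converts $\gamma$ into an idempotent in $KK(C^*_r(G,A), C^*_r(G,A))$ through which the assembly map factors as the identity, forcing the map in \eqref{eq-bc-assembly-map} to be an isomorphism.
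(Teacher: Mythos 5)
A preliminary point of orientation: the paper does not prove this theorem at all --- it is quoted from Higson--Kasparov \cite{HigsonKasparov,HK97} as the launching point, and the paper's own contribution is a different, deliberately finite-dimensional proof of a special case (Theorem A) via the Julg--Valette and de Rham cycles and the direct splitting method. So your proposal has to be measured against the cited Higson--Kasparov argument, whose overall architecture --- Dirac/dual-Dirac elements for an infinite-dimensional Euclidean space, infinite-dimensional Bott periodicity, $\gamma=1$, then descent --- you have correctly identified.

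Measured that way, there is a genuine gap at the central step. You define $\mathcal{A}(E)$ as an inductive limit of the algebras $C_0(V,\operatorname{Cliff}(V))$ with connecting maps ``produced by the finite-dimensional Bott element.'' But a Bott element is a $KK$-class, not a $*$-homomorphism: there is no $*$-homomorphism $C_0(V,\operatorname{Cliff}(V))\to C_0(W,\operatorname{Cliff}(W))$ implementing Bott periodicity, so the inductive system you describe does not exist, and with it goes the definition of $\mathcal{A}(E)$, of $\beta$, and of the rescaling homotopy. Supplying such connecting maps is precisely the key technical innovation of Higson--Kasparov(--Trout): one tensors with the graded suspension $\mathcal{S}=C_0(\mathbb{R})$, sets $\mathcal{A}(V)=\mathcal{S}\widehat\otimes C_0(V,\operatorname{Cliff}(V))$, and for $V\subseteq W$ obtains an honest equivariant $*$-homomorphism from the functional calculus of the unbounded multiplier $X\widehat\otimes 1+1\widehat\otimes C_{W\ominus V}$, Clifford multiplication by the position vector of the orthogonal complement; the Dirac element is then most naturally an asymptotic morphism, so the composition $\gamma=\alpha\circ\beta=1$ is carried out in $E$-theory (or in $KK$ only with substantial extra work). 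A second, smaller gap: since $\mathcal{A}(E)$ is not itself a proper $G$-algebra, ``$\gamma=1$ plus descent'' does not immediately yield the conjecture with coefficients; one must also show that the assembly map is an isomorphism for coefficients in $\mathcal{A}(E)\widehat\otimes A$, which Higson and Kasparov do by exhibiting $\mathcal{A}(E)$ as a limit of proper $G$-subalgebras --- this is where metric properness of the action really enters, rather than through compactness of commutators as you suggest. (Also, with $E$ separable the algebra $\mathcal{A}(E)$ is separable and the $G$-action is norm-continuous, so those are not the obstacles you should be worried about.)
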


Gromov introduced the term \emph{a-T-menable} for the groups in 
the statement of this theorem.  
Our purpose here is  to reexamine the Baum--Connes
isomorphism for a class of a-T-menable groups having a very geometric
flavor.  We consider groups which admit a proper, cellular action on a
\emph{bounded geometry} $\CAT(0)$-cubical space.  A $\CAT(0)$-cubical
space is a simply-connected topological space that is  built by gluing
Euclidean cubes along their faces in a manner that satisfies a local combinatorial condition.  A motivating example is a
finite product of simplicial trees.  The bounded geometry condition
asserts that there is a uniform bound on the number of edges to which
a vertex can belong.
%
%

Building  our previous work \cite{BGH19}, we shall prove the following theorem.  While the result itself is a consequence of the Higson-Kasparov theorem, the approach here is direct, geometrical and better rooted in
simple finite-dimensional constructions.

\begin{theoremA*} 
The Baum--Connes assembly map \eqref{eq-bc-assembly-map} is an
isomorphism for a second countable, locally compact topological
group $G$ that acts properly and cocompactly on a bounded geometry $\CAT(0)$-cubical
space and any separable coefficient algebra $A$.
\end{theoremA*}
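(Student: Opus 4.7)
The plan is to combine the two ingredients advertised in the abstract: the Julg--Valette complex of the $\CAT(0)$-cubical space $X$, which packages the geometry of $X$ into an explicit equivariant Fredholm module, and the direct splitting method in equivariant $KK$-theory, which extracts from such a module a proof that the assembly map is an isomorphism without proving a $\gamma = 1$ identity in $KK^G(\C,\C)$. The theorem will thus be obtained directly, without routing through the Higson--Kasparov theorem.

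Concretely, I would fix a proper, cocompact action of $G$ on a bounded-geometry $\CAT(0)$-cubical space $X$ with a chosen basepoint, and import from \cite{BGH19} the $\Z/2$-graded Hilbert space $\hill$ spanned by the cubes of $X$, together with the Julg--Valette operator $F$ on it. Bounded geometry guarantees that $F$ is bounded; cocompactness together with the explicit half-space formulas for $F$ show that $F^2 - 1$ is compact and that commutators $[F, \pi(f)]$ are compact for $f \in C_0(X)$; and the natural representation of $G$ on $\hill$ commutes with $F$ modulo compact operators. Together these data assemble into a class in $KK^G(C_0(X), \C)$.

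The second step is to build a companion cycle adapted to the a-T-menable geometry of $X$. Using the half-space embedding I would construct a $G$-equivariant field of Bott-type Fredholm modules parametrised by balls in $X$, providing an element in $KK^G(\C, C_0(X))$ or, more conveniently, in a suitable proper $G$-algebra built from $X$. The direct splitting machinery of the fourth author is designed exactly to combine local, finite-dimensional data of this kind with a Julg--Valette style Fredholm module to produce natural transformations on the domain and codomain of the assembly map \eqref{eq-bc-assembly-map} that are mutually inverse, without ever computing a Kasparov product of the two classes globally.

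The main obstacle, as always in this circle of ideas, is controlling the interaction between the local Julg--Valette operator and the averaging operations implicit in the crossed product $C^*_r(G,A)$. The direct splitting method bypasses the computation of a $\gamma$-element but in return demands sharp norm estimates on commutators and on approximation errors, uniformly over $X$ and over the coefficient algebra $A$. Here bounded geometry, finite-dimensionality of $X$, and cocompactness of the action are all essential: bounded geometry provides uniform local bounds, finite-dimensionality limits the combinatorics of $F$, and cocompactness converts these local bounds into the global estimates required to close the splitting argument and deduce the isomorphism for arbitrary separable coefficients $A$.
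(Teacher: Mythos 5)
Your proposal misdescribes the tool it relies on, and in doing so skips the actual mathematical content needed. Nishikawa's direct splitting method does not work by pairing a Julg--Valette class in $KK_G(C_0(X),\C)$ with a Bott-type dual class in $KK_G(\C,C_0(X))$ to produce ``mutually inverse natural transformations'' --- that is essentially the classical Dirac/dual-Dirac ($\gamma$-element) scheme which the direct splitting method is designed to replace. What the method actually requires (Theorem \ref{thm_BCC}) is a \emph{single} Kasparov cycle $(H,F)$ for $KK_G(\C,\C)$ that (a) represents the unit $1_G$ and (b) has Property $(\gamma)$ in the sense of Definition \ref{def_gamma}: a non-degenerate representation of $C_0(X)$ on $H$ for which $g\mapsto[g(\phi),F]$ lies in $C_0(G,\Compacts(H))$ and the cut-off averaging condition (iib) holds. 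No dual Bott field enters, and there is no ``splitting of the assembly map on both sides by two classes'' to be closed by estimates.

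The genuine gap is that the Julg--Valette cycle by itself does not (visibly) satisfy (b): its Hilbert space $\A^*_{L^2}(X)$ is a purely combinatorial $\ell^2$-space spanned by cubes, and the commutator and cut-off conditions of Property $(\gamma)$ cannot simply be read off from the half-space formulas as you assert. The whole point of the paper is to manufacture a second cycle that carries the required $C_0(X)$-structure: the Witten-perturbed de Rham cycle $D_{\mathrm{dR}}=d_w+e_w+d_w^\diamond+e_w^\diamond$ on $L^2$-differential forms over the cubes (the extra term $e_w$ being needed to cancel the cube-wise kernels), for which pointwise multiplication by functions gives the representation and the conditions of Theorem \ref{thm_gamma} can be checked by finite-propagation and uniform-boundedness estimates. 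One must then prove $[D_{\mathrm{dR}}]=[D_{\mathrm{JV}}]$, which is the paper's main technical result and is done by an explicit unbounded homotopy over $C[0,1]$ using the rescaled operators $s^{-1}D_{\mathrm{dR},s}$ with weights $s\cdot w$, together with the prior result $[D_{\mathrm{JV}}]=1_G$ from \cite{BGH19}. Your proposal contains neither a construction playing the role of the de Rham cycle nor any argument identifying your proposed cycle with $1_G$, so the hypotheses of the direct splitting theorem are never met.
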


We shall refer to the groups that we are considering here as
\emph{$\CAT(0)$-cubical groups}.  The essential difference between
$\CAT(0)$-cubical and a-T-menable groups, and so also between the
Higson-Kasparov theorem and our work, is the difference between a
combinatorial and a measured setting.  It is very much analogous to
the difference between a simplicial tree and an $\R$-tree.

A key feature of $\CAT(0)$-cubical spaces is the existence of
\emph{hyperplanes}, and indeed a $\CAT(0)$-cubical space can be essentially
reconstructed from the combinatorics of its hyperplanes.  Each
hyperplane divides the space into precisely two connected components,
and a hyperplane \emph{separates} a pair of points if they are in
different components.  Counting the number of hyperplanes that
separate a pair of points defines a distance and, restricting to a
$G$-orbit, we obtain a distance on $G$ which is intimately related to
the a-T-menability of $G$ through the theory of conditional negative-type functions.  Compare \cite{MR1432323}.
The analogous \emph{measured} notion is that of a space with measured
walls (hyperplanes).  For this notion, the combinatorics of counting
is replaced by a measure on a set of hyperplanes.  The existence of a
proper action on a space with measured walls characterizes
a-T-menability; this was proven for countable discrete groups in
\cite{MR2106770,MR1609459}, and later extended to the locally compact
and second countable setting in \cite{MR2671183}.

The following concrete example is analyzed by Haglund
\cite{haglund-arxiv}.  The Baumslag-Solitar group
\begin{equation*}
  BS(2,3) = \langle\; a,t \;|\; ta^2t^{-1} = a^3 \,\rangle
\end{equation*}
is a-T-menable but cannot act properly on a $\CAT(0)$-cubical space.
Haglund points out that $BS(2,3)$ admits a proper action on the
product of a simplicial tree (its Bass-Serre tree) and the hyperbolic
plane, which is a space with measured walls, and hence $BS(2,3)$ is
a-T-menable.  However, since the cyclic subgroup generated by $a$ is
distorted, that subgroup must have a fixed point in any action of
$BS(2,3)$ on a $\CAT(0)$-cubical space. 

A second, less substantial difference between our result and the Higson-Kasparov theorem is related to the bounded
geometry hypothesis.  Farley proved that Thompson's group $F$ acts
properly on a locally finite but \emph{infinite dimensional}
$\CAT(0)$-cubical space, and so this group is a-T-menable
\cite{MR2006480}.   Nevertheless, $F$ (apparently) falls outside the
scope of our results.  Extending our work to the locally finite but
infinite dimensional setting may be possible, but would entail
significant technical hurdles.

In the balance of this introduction, we shall place this piece in the
broader context of our previous work \cite{BGH19,Nishikawa19} and
explain in outline our new  proof of Theorem~A.  Assume given a second
countable, locally compact group $G$ acting by automorphisms on a bounded
geometry $\CAT(0)$-cubical space.  The Kasparov representation ring of
$G$, denoted $R(G)=KK_G(\C,\C)$ plays a central role here, and indeed
in the Baum--Connes theory in general.  Associated to $G$ there are  three different
Kasparov cycles for $R(G)$.  It emerges that they represent the same
element, but they have different properties, and it is precisely these
differences that allow us to draw conclusions.  The three cycles are:
\begin{eqnarray*}
  D_\mathrm{JV} & : & \text{the Julg-Valette cycle}, \\
  D_\mathrm{PS} & : & \text{the Pytlik-Szwarc cycle}, \\
  D_\mathrm{dR} & : & \text{the de Rham cycle}.
\end{eqnarray*}
The first of these was introduced in \cite{BGH19}, although it will
reappear below in slightly modified form. The second is from \cite{BGH19}, and
the third is the focus of this paper.

A cycle for $R(G)$ is an odd, self-adjoint operator on a
separable, graded Hilbert space $H$ equipped with a representation of
$G$ by grading preserving unitary operators.  The operator satisfies
axioms related to approximate invertibility  and approximate  $G$-equivariance.  The exact form
of the axioms depends on whether one works with bounded or unbounded
cycles, and we refer to Section \ref{sec:direct-splitting} below for precise definitions.

Here then are the differences among the three cycles relevant to this
work.  The Pytlik-Szwarc cycle is constructed directly from the
combinatorics of hyperplanes, and the operator is an \emph{exactly
  $G$-equivariant Fredholm operator of index one.}  It follows
immediately that the Pytlik-Szwarc cycle represents the identity of
$R(G)$.  The unitary representations appearing in the Julg-Valette
cycle are the permutation representations on the set of cubes (of
various dimensions) comprising the $\CAT(0)$-cubical space.  Assuming $G$
acts properly, these representations are \emph{weakly contained in the
  regular representation} of $G$.  The operator in the Julg-Valette
cycle is \emph{not} exactly $G$-equivariant (although, it is
approximately $G$-equivariant, as required by the
axioms).  Finally, assuming the action is both proper and cocompact,
the de Rham cycle satisfies \emph{property 
  ($\gamma$)} recently introduced by Nishikawa \cite{Nishikawa19}. The de Rham cycle
will be defined and analyzed in detail in Sections~4 and 5 below.  We
shall prove Theorem~A by proving the following result:

\begin{theoremB*}
  Let $G$ be a second countable, locally compact topological group
  that acts by automorphisms on a bounded geometry
  $\CAT(0)$-cubical space.  The three cycles above all represent the multiplicative 
  unit of the representation ring:
  \begin{equation}\label{eq-summary}
    [ D_\mathrm{dR} ] = [ D_\mathrm{JV} ] = [ D_\mathrm{PS} ] = 1 \in R(G). 
  \end{equation}
In particular,
\begin{enumerate}[\rm (i)]
\item if $G$ acts properly then it is $K$-amenable, since $[D_\mathrm{JV}]=1$; and
\item if $G$ acts both properly and cocompactly then it satisfies the
  Baum--Connes conjecture, since $[D_\mathrm{dR}]=1$. 
\end{enumerate}
\end{theoremB*}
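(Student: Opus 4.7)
The plan is to establish the three equalities in \eqref{eq-summary} one at a time, with the Pytlik--Szwarc cycle playing the role of anchor, and then to read off (i) and (ii) as standard consequences. By construction the Pytlik--Szwarc operator is an exactly $G$-equivariant Fredholm operator of index one on a graded Hilbert space, which immediately forces $[D_{\mathrm{PS}}] = 1$ in $R(G) = KK_G(\C,\C)$. The content of the theorem therefore lies in identifying the other two cycles with this one.

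For the equality $[D_{\mathrm{JV}}] = [D_{\mathrm{PS}}]$, both operators act on Hilbert spaces built from the cells of the $\CAT(0)$-cubical space, and I would follow the strategy of \cite{BGH19}. The idea is to construct a one-parameter family of $G$-equivariant Kasparov cycles — morally, a rotation mixing the combinatorial cellular unit vectors with the Pytlik--Szwarc phase vectors — that interpolates between the two operators, and then to verify the Kasparov axioms uniformly in the parameter. Bounded geometry enters here to supply uniform local compactness of commutators along the homotopy, so that the resulting path genuinely gives an operator homotopy of Kasparov modules.

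The technical heart of the theorem, and the step I expect to be the main obstacle, is $[D_{\mathrm{dR}}] = [D_{\mathrm{PS}}]$. The de Rham cycle lives on $L^2$-forms of the cube complex, whose underlying unitary representation of $G$ bears little formal resemblance to the permutation representations on cells that appear in the other two cycles, so a naive operator homotopy is unlikely to work. My plan is instead to invoke the direct splitting formalism of \cite{Nishikawa19}: build an explicit bounded $G$-equivariant comparison map between the de Rham Hilbert space and the cellular one — informally, a cochain-to-form inclusion together with a suitable parametrix — and show that, modulo a uniformly locally compact and approximately $G$-equivariant error, this map intertwines the two operators. The serious analysis is in controlling commutators of the de Rham operator with this comparison map cube by cube, and then gluing the estimates together using bounded geometry; this is precisely the kind of situation the direct splitting machinery is designed to handle, so its deployment here is the strategic core of the argument.

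Given \eqref{eq-summary}, the two corollaries are formal. For (i), the permutation representations on cells carried by $D_{\mathrm{JV}}$ are weakly contained in the regular representation whenever $G$ acts properly, so the equation $[D_{\mathrm{JV}}] = 1$ supplies exactly the Kasparov cycle required for $K$-amenability in the sense of Cuntz. For (ii), under a proper and cocompact action the de Rham cycle satisfies property $(\gamma)$ in the sense of \cite{Nishikawa19}, and combining this with $[D_{\mathrm{dR}}] = 1$ feeds directly into the main theorem of that paper to give the Baum--Connes isomorphism for $G$ with any separable coefficient algebra $A$.
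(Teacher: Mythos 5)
Your anchoring of the argument at $[D_{\mathrm{PS}}]=1$, your appeal to \cite{BGH19} for $[D_{\mathrm{JV}}]=[D_{\mathrm{PS}}]$ (quoted in this paper as Theorem~\ref{thm-JV=1}), and your derivation of (i) and (ii) from \eqref{eq-summary} all match the intended argument. The gap is in the step you yourself single out as the technical heart, the identification of the de Rham class. First, the direct splitting method of \cite{Nishikawa19} is not a device for proving that two Kasparov cycles are equal: in this paper its only role is to pass from a cycle representing $1_G$ that has Property $(\gamma)$ to the Baum--Connes isomorphism (Theorems~\ref{thm_BCC} and \ref{thm_gamma}), i.e.\ it is used \emph{after} \eqref{eq-summary} is established, in conclusion (ii). Invoking it as the engine that proves $[D_{\mathrm{dR}}]=[D_{\mathrm{PS}}]$ misidentifies what the machinery does, and no other mechanism is offered. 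Second, even granting a bounded, approximately equivariant ``cochain-to-form'' comparison map intertwining the two operators modulo locally compact errors, this does not by itself give equality in $KK_G(\C,\C)$: the cellular space sits inside $\Omega^*_{L^2}(X)$ as a very small subspace, so one must also show that the complementary part of the de Rham cycle contributes nothing, and one must produce an actual homotopy (or unitary equivalence up to admissible perturbation) of cycles rather than an intertwining relation.

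What the paper actually proves is $[D_{\mathrm{dR}}]=[D_{\mathrm{JV}}]$ (Theorem~\ref{thm-de-Rham-is-JV}), by writing down an explicit unbounded cycle for $KK_G(\C,C[0,1])$ on the module $\mathcal{H}(X)$ of \eqref{eq-def-curly-H}: the operator is $s^{-1}D_{\mathrm{dR},s}$ for $s\in(0,1]$, where $D_{\mathrm{dR},s}$ is built from the rescaled weight $s\cdot w$, and is $D_{\mathrm{JV}}$ at $s=0$. The Witten-type deformation built into the de Rham cycle is exactly what makes this converge: on each cube the kernel of $d_{sw}+d_{sw}^{\diamond}$ is the line $A_s(C)$ spanned by $e^{sw_C}dx_{H_1}\cdots dx_{H_q}$, the operators $e_{sw}$, $e_{sw}^{\diamond}$ preserve $A_s^*(X)$, and there $s^{-1}e_{sw}$ tends to the Julg--Valette differential as $s\to 0$; on the orthogonal complement, and block by block in the decomposition \eqref{eq-normal-cube-decomp}, the lower bound of Lemma~\ref{lemma_op} is uniform in $s$ after rescaling and grows with distance from the basepoint, which yields regularity and compact resolvent of the whole family (Lemmas~\ref{lem-tech-regularity-result} and \ref{lem-tech-compact-resolvent-result}), while approximate equivariance is the weight-uniform estimate of Lemma~\ref{lem-G-bounded-property-for-de-Rham}. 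Your proposal contains neither the rescaling homotopy nor the control of the complementary subspace, so as written the central equality $[D_{\mathrm{dR}}]=[D_{\mathrm{JV}}]=[D_{\mathrm{PS}}]$ is not established.
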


The equalities (\ref{eq-summary}) are the heart of this
theorem.  Assuming these, and the properties of the various cycles
outlined above, conclusions (i) and (ii) are immediate; the first is
essentially the definition of $K$-amenability \cite{MR716254}, and the
second is an application of Nishikawa's direct splitting method
\cite{Nishikawa19}. 
As for the individual equalities comprising (\ref{eq-summary}), the
first is the main technical result of this piece, and is the content of
Theorem~\ref{thm-de-Rham-is-JV}.  The second is the main technical
result of \cite{BGH19} and, as remarked above, the final equality is
obvious.

The paper is organized as follows.  Sections 2 and 3 contain
background material.  In particular, in Section 2 we review
Nishikawa's direct splitting method from \cite{Nishikawa19} and in
Section 3 we recast the definition of the Julg-Valette cycle from
\cite{BGH19} in a form convenient for what follows.  We introduce the
de Rham cycle in Section 4, and prove that it is an unbounded Kasparov
cycle for $R(G)$.  See Theorem~\ref{thm-de-Rham-cycle}.  Finally, we
analyze the de~Rham cycle in Section~5.  The key results here are the
equality of the de Rham and Julg-Valette cycles in $R(G)$ and that the
de Rham cycle has Property ($\gamma$).  See
Theorems~\ref{thm-de-Rham-is-JV} and \ref{thm-de-Rham-has-gamma},
respectively.

%


\section{The Direct Splitting Method}\label{sec:direct-splitting}

In this section, $G$ will be an arbitrary second countable, locally compact topological group that admits a second countable, locally compact and $G$-compact model $E=\underline{E}G$ of the universal proper $G$-space (see \cite{BCH94} for more information about  $\underline{E}G$).

We begin by recalling the definition of Kasparov cycles for $KK_G(\C,\C)$ from \cite{Ka88}. For a Hilbert space $H$, we denote by $\Compacts(H)$  the algebra of all compact operators on $H$.

\begin{definition} A Kasparov cycle for $KK_G(\C, \C)$ is a pair $(H, F)$ where $H$ is a separable, graded $G$-Hilbert space and $F$ is an odd, self-adjoint bounded $G$-continuous operator on $H$ satisfying the following two conditions:

\begin{enumerate}[\rm (i)]

\item  $1-F^2 \in \Compacts(H)$; and 

\item 
$ g(F)- F\in \Compacts(H) $ for any $g\in G$,
\end{enumerate}
where $g(F) = u_g F  u_g^\ast$, with  $g\mapsto u_g$ the  unitary representation of $G$ on $H$.
\end{definition}
There is a natural notion of homotopy  of Kasparov cycles (see \cite[Definition 2.3]{Ka88}). The commutative ring $R(G)=KK_G(\mathbb{C}, \mathbb{C})$ is defined as the set of homotopy equivalence classes of Kasparov cycles. We write by $[H, F]$, the element in $R(G)$ which corresponds to a cycle $[H, F]$.  
Among the Kasparov cycles there is a  cycle of the form  $[\mathbb{C}\oplus0, 0]$ which corresponds to the trivial representation of $G$. This cycle is denoted by 
 $1_G$ and it is the multiplicative identity element of the ring $R(G)$.

For a locally compact, Hausdorff space $X$ and a $\Calg$ $A$, we denote by $C_0(X, A)$, the $\Calg$ of continuous $A$-valued functions on $X$ vanishing at infinity. Recall that a compactly supported, continuous, non-negative function $c$ on a (co-compact) proper $G$-space $X$ such that the Haar integral 
\[\displaystyle \int_{g \in G}g(c)^2d\mu_G(g)=1
\] is called a cut-off function. We also note that for any closed subgroup $G'$ of $G$, a Kasparov cycle for $KK_G(\C, \C)$ can be viewed as a cycle for $KK_{G'}(\C, \C)$. 

\begin{definition}[See {\cite[Definition 2.2]{Nishikawa19}}]
 \label{def_gamma}
We say a Kasparov cycle $(H, F)$ for $KK_G(\C, \C)$ has Property $(\gamma)$ if the following conditions are satisfied:
\begin{enumerate}[\rm (i)]
\item 
$[H, F]=1_K$ in $R(K)$ for any compact subgroup $K$ of $G$;

\item 
there is a non-degenerate, $G$-equivariant representation of the $G$-$C^*$-alg\-ebra $C_0(E)$ on $H$ satisfying the following:

\begin{enumerate}
\item[(iia)] 
the function $g \mapsto [g(\phi), F ]$ belongs to $C_0(G, \Compacts(H))$ for every $\phi \in C_0(E)$;
and

\item[(iib)] 
for some cut-off function $c$ on $E$
\[
\int_{g \in G} g(c)F g(c)d{\mu_G}(g) - F \in \Compacts(H),
\]
where the Haar integration appearing here is taken in the strong topology.
\end{enumerate}

\end{enumerate}
\end{definition}

The main result of \cite{Nishikawa19} is as follows:

\begin{theorem}[See {\cite[Corollary 5.6]{Nishikawa19}}]
\label{thm_BCC}
 If the element $1_G \in KK_G(\C,\C)$ is represented by a Kasparov cycle $(H,F)$ with Property $(\gamma)$, then the Baum--Connes conjecture with coefficients holds for $G$.
\end{theorem}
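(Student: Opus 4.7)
The plan is to use the cycle $(H, F)$, together with Property $(\gamma)$, to construct an explicit inverse for the Baum--Connes assembly map $\mu_A$ for every separable coefficient algebra $A$, rather than going through the indirect Dirac/dual-Dirac machinery. The key observation is that condition (ii) packages $(H, F)$ as more than a cycle for $KK_G(\C,\C)$: the non-degenerate $G$-equivariant representation $\pi\colon C_0(E)\to\mathcal{L}(H)$ turns $(H, F, \pi)$ into a Kasparov cycle for $KK^G(C_0(E),\C)$, with condition (iia) providing the required commutation-modulo-compacts axiom and the original condition $1-F^2\in\Compacts(H)$, combined with (iia), giving local compactness $\pi(\phi)(1-F^2)\in\Compacts(H)$. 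Write $[D]\in KK^G(C_0(E),\C)$ for the resulting class; it will play the role of a ``Dirac-type'' element.

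First I would restrict $[D]$ to classes in $KK^G(C_0(Y),\C)$ for each $G$-compact proper subspace $Y\subset E$, and use the Kasparov product followed by descent to define a candidate splitting
\[
\nu_A\colon K_*(C^*_r(G,A))\longrightarrow K_*^G(\underline{E}G;A).
\]
Condition (i) of Property $(\gamma)$---that $(H,F)$ restricts to $1_K$ for every compact subgroup $K$---is what makes $\nu_A$ compatible with change of $Y$ and hence well-defined on the inductive limit defining $K^G_*(\underline{E}G;A)$; it is also what allows the descended element to pair with $K_*(C^*_r(G,A))$. The construction is ``direct'' in that $\nu_A$ is written down explicitly from $[D]$ rather than obtained by inverting a gamma element.

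Next I would compute $\mu_A\circ\nu_A$. Here condition (iib) plays the decisive role. After unwinding descent and Kasparov product, the composition is represented by an operator of the form $\int_G g(c)\,F\,g(c)\,d\mu_G(g)$ acting on the appropriate Hilbert module; hypothesis (iib) identifies this with $F$ modulo compacts, so the composition reduces to Kasparov product with $[H,F]=1_G$, which is the identity of $R(G)$. Thus $\mu_A\circ\nu_A=\mathrm{id}$, showing that $\mu_A$ is split-surjective.

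The main obstacle will be the reverse composition $\nu_A\circ\mu_A=\mathrm{id}$ on $K^G_*(\underline{E}G;A)$. This cannot be read off directly from $[H,F]=1_G$, because $\mu_A$ is not a priori surjective and because the target involves an inductive limit over noncompact subspaces of $E$. My strategy would be to reduce to classes represented on $G$-compact proper $Y\subset E$ and, using condition (i) together with the averaging identity (iib), to rewrite $\nu_A\circ\mu_A$ on such classes as Kasparov product over $C_0(Y)$ with the restricted cycle, which by the compact-isotropy version of $[H,F]=1$ equals the identity. Handling the passage to the limit uniformly in $Y$---in particular, making the cut-off-function bookkeeping compatible across enlargements of $Y$---is the delicate point where I would expect the principal technical difficulties.
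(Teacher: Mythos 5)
First, a point of comparison: the paper does not prove this statement at all --- Theorem~\ref{thm_BCC} is imported verbatim from \cite{Nishikawa19} (Corollary 5.6 there), so your proposal is in effect an attempt to re-derive the direct splitting method itself. Your opening observation is correct and is indeed part of the picture: since $\Compacts(H)$ is an ideal and (iia) gives $[\pi(\phi),F]\in\Compacts(H)$, the representation $\pi$ of $C_0(E)$ upgrades $(H,F)$ to a cycle for $KK_G(C_0(E),\C)$, a Dirac-type class. Your guess about how (iib) enters the computation of $\mu_A\circ\nu_A$ (identifying the cut-off--averaged operator $\int_G g(c)Fg(c)\,d\mu_G(g)$ with $F$ modulo compacts, so that the composition becomes the product with $[H,F]=1_G$) also matches the intended role of that hypothesis.

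The genuine gap is that your recipe does not actually define the splitting $\nu_A$. Restricting $[D]$ to $KK_G(C_0(Y),\C)$, tensoring with $A$ and applying descent produces a class in $KK\bigl(C_0(Y,A)\rtimes_r G,\, C^*_r(G,A)\bigr)$; Kasparov product with such a class moves $K$-theory \emph{from} the proper side \emph{to} $K_*(C^*_r(G,A))$ --- the same direction as the assembly map --- and there is no pairing of it against $K_*(C^*_r(G,A))$ landing in $K^G_*(\underline{E}G;A)$. To reverse the direction by a $KK$-product one would need a dual-Dirac/Mishchenko-type element (e.g.\ in $KK_G(\C,C_0(E))$), whose existence is precisely what Property $(\gamma)$ does \emph{not} assume; avoiding it is the whole point of the direct splitting method. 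Nishikawa instead constructs $\nu_A$ by an explicit operator-level construction: from a cycle representing $x\in K_*(C^*_r(G,A))$, the $C_0(E)$-representation, the cut-off function and the averaged operator above, he builds a $K$-theory class over the proper algebra directly, and then conditions (iib) and (i) are used to compute the two compositions with $\mu_A$. In your sketch the definition of $\nu_A$ is thus missing (the stated one goes the wrong way), and the second composition $\nu_A\circ\mu_A=\mathrm{id}$ --- which you yourself flag as the delicate point --- is left entirely open, so the proposal does not amount to a proof.
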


We end this section by recalling the definition of unbounded cycles for $KK_G(\C,\C)$ and sufficient conditions for such a cycle to define a (bounded) cycle with Property $(\gamma)$.

\begin{definition}[Compare \cite{BaajJulg}]
\label{def-unbounded-cycle}
An \emph{unbounded cycle} for $KK_G(\C,\C)$ consists of a  separable, graded $G$-Hilbert space $H$ and an odd-graded self-adjoint unbounded operator on $H$ for which

\begin{enumerate}[\rm (i)]

\item The operator $D$ has compact resolvent.

\item The $G$-action on $H$ preserves the domain of $D$, and $D {-} g(D)$ extends to a bounded operator on $H$, for every $g\in G$,  and defines a strongly continuous, locally bounded,  bounded operator-valued function of $g\in G$.

\end{enumerate}

\end{definition}

\begin{lemma}[See \cite{BaajJulg}]
\label{lem-baaj-julg}
If  $(H, D)$ is an unbounded Kasparov cycle for $KK_G(\C,\C)$, and if $F$ is the bounded transform
\[
F= {D}{(1+D^2)^{-\frac12}},
\]
then $(H,F)$ is a bounded  Kasparov cycle for $KK_G(\C,\C)$.
\qed
\end{lemma}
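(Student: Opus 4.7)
The plan is to use the bounded Borel functional calculus on $D$ to establish the algebraic and compactness axioms for $F$, and then to use an integral representation of the bounded transform to reduce approximate $G$-invariance to the hypothesis that $D - g(D)$ extends to a bounded operator.

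First, I would dispatch the easy algebraic properties. Since $t\mapsto t(1+t^2)^{-1/2}$ is a bounded, continuous, odd function, functional calculus gives that $F$ is bounded, self-adjoint, and odd-graded. The $G$-continuity demanded by Definition 2.1 is automatic: because $g\mapsto u_g$ is a strongly continuous unitary representation, $g\mapsto u_g F u_g^{*}\xi$ is continuous for every $\xi\in H$ and every bounded operator $F$. Moreover
\[
1 - F^{2} \;=\; (1+D^{2})^{-1} \;=\; (D+i)^{-1}(D-i)^{-1},
\]
which is compact by Definition~\ref{def-unbounded-cycle}(i), settling condition~(i) for the bounded cycle.

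The substantive step is to show $g(F)-F\in\Compacts(H)$ for each $g\in G$. I would invoke the integral representation
\[
F \;=\; \frac{1}{\pi}\int_{0}^{\infty} \lambda^{-1/2}\, D\,(1+\lambda+D^{2})^{-1}\,d\lambda,
\]
valid by the spectral theorem and the elementary scalar identity. Writing $A=D$, $B=g(D)$, and $R_{X}(\lambda)=(1+\lambda+X^{2})^{-1}$, the second resolvent identity together with the factorization $B^{2}-A^{2}=B(B-A)+(B-A)A$ gives
\[
A R_{A}(\lambda) - B R_{B}(\lambda) \;=\; (A-B)R_{A}(\lambda) + B^{2} R_{B}(\lambda)(B-A) R_{A}(\lambda) + B R_{B}(\lambda)(B-A)\,A R_{A}(\lambda).
\]
By Definition~\ref{def-unbounded-cycle}(ii), $A-B$ is bounded. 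The spectral estimates $\|R_{X}(\lambda)\|\le (1+\lambda)^{-1}$, $\|X R_{X}(\lambda)\|\le \tfrac{1}{2}(1+\lambda)^{-1/2}$, and $\|B^{2}R_{B}(\lambda)\|\le 1$ show that each summand has operator norm $O((1+\lambda)^{-1})$, uniformly in $\lambda$. Since $\int_{0}^{\infty}\lambda^{-1/2}(1+\lambda)^{-1}\,d\lambda<\infty$, the integral expressing $F-g(F)$ converges in operator norm, and its integrand is compact at each $\lambda$ because $R_{A}(\lambda)$ and $R_{B}(\lambda)$ are compact (the latter follows by unitarily conjugating the compact resolvent of $D$). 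The norm-closedness of $\Compacts(H)$ then delivers $F-g(F)\in\Compacts(H)$.

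The principal technical nuisance is the uniform-in-$\lambda$ norm bookkeeping, but this is standard spectral calculus, and no further obstacle arises. All of the conditions of Definition 2.1 are thus verified, completing the proof.
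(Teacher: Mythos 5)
The paper offers no proof of this lemma at all --- it is quoted from \cite{BaajJulg} and closed without argument --- so the relevant comparison is with the cited source, and your argument is exactly the standard Baaj--Julg proof, carried out correctly in its essentials: $1-F^2=(1+D^2)^{-1}$ is compact by Definition~\ref{def-unbounded-cycle}(i); the decomposition of $AR_A(\lambda)-BR_B(\lambda)$ with $A=D$, $B=g(D)$ via the resolvent identity is algebraically right (and the potentially troublesome factor $B^2R_B(\lambda)$ is harmless, since $B^2R_B(\lambda)=1-(1+\lambda)R_B(\lambda)$ is a genuine bounded operator); the estimates $\|R_X(\lambda)\|\le(1+\lambda)^{-1}$, $\|XR_X(\lambda)\|\le\tfrac12(1+\lambda)^{-1/2}$, $\|B^2R_B(\lambda)\|\le1$ give an integrand of size $O(\lambda^{-1/2}(1+\lambda)^{-1})$, compact for every $\lambda$ because each term ends in $R_A(\lambda)$ or $AR_A(\lambda)$. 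Two small points deserve sharper wording. First, the integral representing $F$ itself converges only strongly (its integrand is merely $O(\lambda^{-1/2}(1+\lambda)^{-1/2})$, not integrable at infinity), so the precise statement is that the truncated integrals converge strongly to $F$ and to $g(F)$, while the difference of the integrands is norm-integrable; this is what your argument in effect uses, but it should be said. Second, your claim that ``$G$-continuity is automatic'' is only correct if $G$-continuity is read as strong continuity of $g\mapsto u_gFu_g^*$; in Kasparov's convention, which Definition 2.1 is following, a $G$-continuous operator is one for which $g\mapsto g(F)$ is \emph{norm}-continuous, and that does not follow from strong continuity of $g\mapsto g(D)-D$ alone. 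In the present paper this is a non-issue because, as noted in Remark~\ref{rem-continuity-not-an-issue}, $g(D)=D$ for $g$ in an open subgroup, so $g\mapsto g(F)$ is locally constant; but in the generality in which you state the argument you should either strengthen the continuity hypothesis of Definition~\ref{def-unbounded-cycle}(ii) or invoke this observation rather than call the condition automatic.
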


\begin{theorem}[See  {\cite[Theorem 6.1]{Nishikawa19}}] 
\label{thm_gamma} 
Let $(H,D)$ be an unbounded Kasparov cycle for $KK_G(\C,\C)$. Let $B$ be a dense $G$-subalgebra of $C_0(E)$ consisting of compactly supported functions that includes cutoff functions for the $G$-action.   Assume that there is a non-degenerate representation of the $G$-$C^*$-alg\-ebra $C_0(E)$ on $H$ which has the following properties: 
\begin{enumerate}[\rm (i)]

\item  Each $b\in B$  preserves the domain of $D$,  the commutator  $[D,g(b)]$ extends to a bounded operator on $H$, and defines a uniformly bounded, bounded operator-valued function of $g\in G$.

\item For every $b\in B$ there is a compact subset $K\subseteq E$ such that 
\[
\operatorname{Supp} \left ( [D , g(b)]\right ) \subseteq g \cdot K 
\]
for all $g\in G$.

\end{enumerate}
If $F$ is the bounded transform of $D$, then the pair $(H,F)$ is a cycle for $KK_G(\C,\C)$  that satisfies the conditions \textup{(iia)} and \textup{(iib)} of Property $(\gamma)$ in Definition \ref{def_gamma}.
\end{theorem}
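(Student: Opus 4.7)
The plan is to use the Baaj--Julg integral representation
\[
F \;=\; \frac{2}{\pi}\int_0^\infty D R(t)\, dt, \qquad R(t) \;=\; (1+t^2+D^2)^{-1},
\]
to convert commutators with $F$ into $t$-integrals, norm-convergent in the operator norm, of expressions involving $[D,\,\cdot\,]$. Direct computation with the Leibniz rule and $[R(t),x]=-R(t)(D[D,x]+[D,x]D)R(t)$ yields, for any $x$ preserving $\operatorname{dom}(D)$ with $[D,x]$ bounded,
\[
[F,x] \;=\; \tfrac{2}{\pi}\int_0^\infty\!\bigl([D,x]R(t) \,-\, DR(t)D[D,x]R(t) \,-\, DR(t)[D,x]DR(t)\bigr)\, dt.
\]
In each integrand, $[D,x]$ appears once, flanked by bounded functions of $D$ that contain at least one resolvent factor. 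Since $(1+D^2)^{-1}$ is compact by the unbounded-cycle axiom, every $R(t)$ is compact, as are $DR(t)$ and $R(t)D$. The functional-calculus estimates $\|R(t)\|\le(1+t^2)^{-1}$ and $\|DR(t)\|\le\tfrac12(1+t^2)^{-1/2}$, combined with the uniform bound on $\|[D,g(b)]\|$ from hypothesis~\textup{(i)}, supply an integrable $t$-dominant uniform in $g$; hence $[F,g(b)]\in\Compacts(H)$ with norm bounded independently of $g\in G$.

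For \textup{(iia)}, density of $B$ in $C_0(E)$ reduces the check to $\phi=b\in B$. Pick $\chi\in C_c(E)$ with $\chi\equiv 1$ on the compact set $K$ furnished by hypothesis~\textup{(ii)}. The support condition unpacks to the absorbing identities $g(\chi)[D,g(b)]=[D,g(b)]=[D,g(b)]g(\chi)$, valid for all $g\in G$. Substituting these into the integrand of $[F,g(b)]$, every term acquires an outer factor of the form $Ag(\chi)$ or $g(\chi)A$, where $A\in\{R(t),DR(t)\}$ is compact. The key vanishing lemma is: for any $T\in\Compacts(H)$ and any $\chi\in C_c(E)$, the function $g\mapsto Tg(\chi)$ is compact-valued, norm-continuous, and vanishes at infinity in operator norm. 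Indeed, for $g_n\to\infty$ in $G$, the supports $g_n\cdot\operatorname{supp}(\chi)$ escape to infinity in $E$, so $g_n(\chi)\xi_n\to 0$ weakly for any bounded sequence $\xi_n$ (by non-degeneracy of the $C_0(E)$-action), and compactness of $T$ upgrades this to norm convergence. Combined with dominated convergence in $t$, this yields $g\mapsto[F,g(b)]\in C_0(G,\Compacts(H))$.

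For \textup{(iib)}, using $\int_G g(c)^2\,d\mu_G=1$ I would first rewrite
\[
\int_G g(c)Fg(c)\,d\mu_G(g) - F \;=\; \int_G g(c)[F,g(c)]\,d\mu_G(g).
\]
Because $c\in B$ has compact support and the $G$-action on $E$ is proper, for any compact $L\subseteq E$ the set $\{g\in G: g\cdot\operatorname{supp}(c)\cap L\neq\emptyset\}$ is relatively compact in $G$. Choosing an exhaustion $\chi_L\in C_c(E)$ with $\chi_L\nearrow 1$, the compressed integrals
$\int_G\chi_L g(c)[F,g(c)]\chi_L\,d\mu_G$
reduce to Bochner integrals of norm-continuous, compact-operator-valued integrands over relatively compact subsets of $G$, and hence define compact operators. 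The main obstacle I anticipate is the passage $L\to\infty$ in operator norm: the $g$-integral does not converge absolutely in norm, so one cannot directly infer compactness of the full integral from closedness of $\Compacts(H)$. The cleanest route is a double dominated-convergence argument that simultaneously controls the $t$-tail via functional calculus, the $g$-tail via the $C_0$-vanishing from \textup{(iia)} applied to $b=c$, and the $L$-tail via properness, thereby identifying the full integral as a norm limit of compact operators.
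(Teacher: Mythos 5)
First, a remark on scope: the paper does not prove this theorem at all --- it is quoted verbatim from Nishikawa \cite[Theorem 6.1]{Nishikawa19} --- so your proposal can only be measured against that reference, not against an argument in this text. Your treatment of the first part and of condition (iia) is essentially sound: the Baaj--Julg integral with the commutator identity you wrote is correct, the bounds $\|R(t)\|\le (1+t^2)^{-1}$, $\|DR(t)\|\le\tfrac12(1+t^2)^{-1/2}$ give an integrable dominant uniform in $g$, the absorbing identities $g(\chi)[D,g(b)]=[D,g(b)]=[D,g(b)]g(\chi)$ are the right way to use hypothesis (ii), and your vanishing lemma (compact times a translate of $\chi$ whose support escapes, via weak-null sequences and properness of the action on $E$) is correct; you should also record the easy norm-continuity in $g$, which follows from norm-continuity of $g\mapsto g(b)\in C_0(E)$ and $\|[F,g(b)]-[F,g_0(b)]\|\le 2\|g(b)-g_0(b)\|_\infty$.

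The genuine gap is exactly where you flag it, in (iib), and the route you sketch cannot close it. A ``double dominated-convergence argument'' needs an integrable dominant over $G$, but all that (iia) gives is that $g\mapsto\|[F,g(c)]\|$ vanishes at infinity --- it is not in $L^1(G)$ --- and your $\chi_L$-truncation errors cannot be controlled either, because $g(c)[F,g(c)]$ is only left-localized: $[F,g(c)]$, unlike $[D,g(c)]$, has no compact right support, and the resolvents $R(t)$ have no locality for a general $D$. The mechanism that actually works (and is the content of Nishikawa's proof) is not domination but almost-orthogonality: interchange the $t$- and $G$-integrals (justified weakly, vector by vector), pull the rightmost factor $R(t)$ or $DR(t)$ \emph{outside} the $G$-integral, and observe that each remaining inner integral has the form $\int_G g(c)\,B_g(t)\,g(\chi)\,d\mu(g)$ with $\sup_{g,t}\|B_g(t)\|<\infty$, where the right-hand localization $g(\chi)$ comes from hypothesis (ii) applied to $[D,g(c)]$. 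A Cauchy--Schwarz (row--column) estimate, using properness and cocompactness through the boundedness of $\int_G g(|c|^2)\,d\mu$ and $\int_G g(|\chi|^2)\,d\mu$ in $C_b(E)$, shows each such inner integral is a bounded operator, uniformly in $t$; multiplying by the compact factor $R(t)$ or $DR(t)$ left outside then gives, for each $t$, a compact operator of norm $O\bigl((1+t^2)^{-1}\bigr)$, and the $t$-integral converges in norm, proving $\int_G g(c)Fg(c)\,d\mu - F\in\Compacts(H)$. Without this two-sided localization plus bounded-overlap step, your argument for (iib) does not go through.
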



\section{The Julg-Valette Cycle}

In this section we shall briefly review the construction  in 
\cite{BGH19}  of    the \emph{Julg-Valette complex}  for a
$\CAT(0)$-cubical space, and the associated \emph{Julg-Valette cycle}
for $KK_G(\C,\C)$. The construction is based on the ideas that Julg
and Valette applied to the case of trees \cite{JulgValetteQ_p}, but the extension to general
$\CAT(0)$-cubical spaces requires some additional care. One specific
issue is the need to equip cubes with  \emph{orientations}, and we shall present here an alternative but equivalent approach to the one in \cite{BGH19} that is better suited to our later arguments.  Otherwise our account here is identical to the one in \cite{BGH19}.
 
We first recall some necessary ideas from the theory of  $\CAT(0)$-cubical spaces.  For more details see \cite{BGH19} and the references given there, especially \cite{NibloReeves98}.  

For the rest of this paper, $X$ will be a bounded geometry
$\CAT(0)$-cubical space, as in \cite[Section 2.2]{NibloReeves98}.  In
particular, $X$ is obtained by identifying cubes, each of which is
isometric to the standard Euclidean cube $[0,1]^q$ of the appropriate
dimension, by isometries of their faces.  We shall refer to $0$-cubes
as vertices, $1$-cubes as edges, etc.  The bounded geometry
condition asserts that there is a uniform bound on the number of edges
that contain a given vertex, and this implies that $X$ is finite
dimensional.  Further, we shall assume all group actions on $X$ are by
automorphism of its structure as a $\CAT(0)$-cubical space.  In
particular, a group acting on $X$ permutes the vertices, the edges,
and the $q$-cubes for each fixed $q$.

The \emph{midplanes} of a $q$-cube $C$ in $X$ correspond to the
intersections of the standard cube with the coordinate hyperplanes
$x_j{=}\frac 12 $ ($j{=}1,\dots, q$) and there are therefore $q$
midplanes of $C$.  We generate an equivalence relation on the
collection of all the midplanes of all the cubes in $X$ by declaring
as equivalent any two midplanes whose intersection is itself a
midplane.  The union of all midplanes in a given class is called a
\emph{hyperplane}, and each hyperplane $H$ satisfies the following
important properties: (i) $H$ is connected, and (ii) the complement
$X \setminus H$ has precisely two path components.  We shall say that
a hyperplane $H$ \emph{separates} two subsets of $X$ if these subsets
lie in distinct components of $X\setminus H$.  A group acting on $X$
necessarily permutes the hyperplanes.  The hyperplanes of $X$
play an important role in what follows.

\begin{example} 
\label{ex-products-and-hyperplanes}
In the case of a tree, the hyperplanes are simply the midpoints of edges.  In the case of a product of two $\CAT(0)$-cubical spaces (for instance, a product of two trees), the hyperplanes are the products of one of the spaces with a hyperplane in the other.
\end{example}

We shall handle the issue of orientations that we mentioned above by using differential forms.  By a \emph{smooth form}  on a cube $C$ we shall always mean a complex-valued $C^\infty$-differential form defined on all of $C$, including its boundary.  So on the cube $[0,1]^q$ the smooth $p$-forms can be written 
\[
\alpha = \sum_I f_I dx_I
\]
where $I = \{ \, i_1< \cdots < i_p\,\}$ is a multi-index, and $f_I$ is a smooth, complex-valued function on  the cube $[0,1]^q$.  The interior of each cube is  a Riemannian manifold of volume one, and so each space $\Omega^p(C)$ of smooth $p$-forms carries a natural inner product.

\begin{definition}
\label{def-A-space}
If $C$ is an individual  cube in $X$, then we shall denote by   $\A (C)$  the one-dimensional Hilbert  space of top-degree differential forms on $C$ with constant coefficients.
\end{definition}

\begin{definition}
\label{def-A-space-of-X}
For $q=0,\dots, \dim(X)$ we shall denote by $\A^q (X)$ the algebraic   direct sum of the spaces $\A(C)$  over all $q$-cubes in $X$.  We shall denote by $\A_{L^2}^q(X)$ the Hilbert space orthogonal direct sum of the  spaces $\A(C)$. 
\end{definition}

We now define a differential
\begin{equation}
\label{eq-jv-differential}
d_{\mathrm{JV}} \colon \A ^q (X ) \longrightarrow \A ^{q+1}(X )
\end{equation}
using the following notion of adjacency.

\begin{definition}
 Let $C$ be a cube in $X$. A hyperplane $H$  is \emph{adjacent to $C$}   if it  is disjoint from $C$, but  intersects a cube   that includes $C$ as a codimension-one face. \end{definition}

\begin{example} 
In the case of a tree, each hyperplane is adjacent to precisely two vertices, namely the endpoints of the edge in which the hyperplane is included.
\end{example}

At this point we need to choose a \emph{base vertex} $P$ in $X$, which will remain fixed throughout the rest of the paper (by a \emph{vertex}  we mean  a $0$-cube of $X$). 

\begin{definition}
\label{def-coordinate-for-a-hyperplane}
Let $H$ be a hyperplane and let $D$ be a cube in $X$ that intersects $H$ (necessarily in a midplane of $D$). Define a coordinate function
\[
x_H \colon D \longrightarrow \R
\]
as follows: 
\begin{enumerate}[\rm (i)]

\item $x_H$ is an affine function on $D$

\item $x_H$ assumes the value $1$ on the codimension-one face of $D$ that is disjoint from $H$ and separated by $H$ from the base vertex, while it  assumes the value $0$ on the opposite codimension-one face, disjoint from $H$ but not separated by $H$ from the base vertex.

\end{enumerate}
Then define 
\[
\alpha_H = d x_H .
\]
This is a constant-coefficient one-form on $D$.
\end{definition}

\begin{definition}
\label{def-weight-function0}
A \emph{weight function} for $X$ is a positive function on the set of hyperplanes.
\end{definition}

\begin{definition}
\label{def-weight-function-proper}
A weight function $w$ is \emph{proper} if the set
$\{ \, H\,  : \,  w(H)\leq M \, \}$ is finite for any $M>0$.  
\end{definition}

\begin{definition}
\label{def-G-adapted-weight-function}
Let $G$ be a second countable and locally compact group acting on the
$\CAT(0)$-cubical space $X$  by automorphisms. A weight function $w$
is \emph{$G$-adapted} if 
\[
\sup_{H}\bigl |w(H) - w(gH)\bigr | < \infty
\]
for every $g\in G$, and if  there is an open subgroup of $G$ such that
$w(gH)=w(H)$ for  every $H$ and every element $g$ of the subgroup. 
\end{definition}

\begin{remark}
  Of course, if $G$ is 
  discrete the final condition is automatic.
  In general it would be sufficient to require
  that $g\mapsto w(gH)$ is continuous for every $H$, but the stronger
  condition we have chosen is more convenient.  
\end{remark}

In what follows, our preferred weight function will be
\begin{equation}
\label{eq-distance-weight-function}
w(H)=\operatorname{distance}(P, H),
\end{equation}
which is both $G$-adapted and   proper, if the complex $X$ has bounded geometry (see Section 1).  However, the construction that follows can be carried out for a general weight function, and it will be helpful for us not to fix a weight function until the final section of the paper.  

\begin{definition}
\label{def-jv-differential}
If $\beta \in \A (C)$, then we define 
\begin{equation}
\label{eq-def-of-d}
d_{\mathrm{JV}} \colon \beta \longrightarrow \sum_{H\in \SAH(C)} w(H) \cdot  \alpha _H \wedge \beta
\end{equation}
where:
\begin{enumerate}[\rm (i)]

\item 
The sum is over the set $\SAH(C)$ of all hyperplanes that are adjacent to $C$  and   separate $C$ from the base vertex.

\item 
On the right-hand side of the formula, $\beta$ is pulled back from $C$ to the cube $D_H$ that is intersected by $H$ and that contains $C$ as a codimension one face (via the orthogonal projection from $D_H$ to its face $C$).  Thus the summands in the formula above lie in different direct summands of $A^{q+1}(X)=\oplus_{\dim(D)=q+1} A (D)$.

\end{enumerate}

\end{definition}

It is shown in \cite{BGH19} that $d_{\mathrm{JV}}$ is indeed a
differential, and that the cohomology of the \emph{Julg-Valette
  complex} 
\begin{equation}
\label{eq-jv-complex}
\xymatrix{
  \A^0 (X)\ar[r]^{d_{\mathrm{JV}}} & \A^1(X)\ar[r]^{d_{\mathrm{JV}}}
  & \cdots & \ar[r]^{d_{\mathrm{JV}}}
  & \A^N(X) 
}, \quad N=\dim(X)
\end{equation}
vanishes in positive degrees, and is one-dimensional in degree zero.

\begin{example}
  If $X$ is a tree, then for each $0$-cube except the base vertex
  there is a unique hyperplane that is adjacent to the cube and
  separates it from the base vertex.  If we set $w(H)\equiv 1$ then
  the single differential in the Julg-Valette complex identifies with
  operator studied by Julg and Valette \cite{JulgValetteQ_p}.
\end{example}

\begin{example}
If $X$ is a product of two $\CAT(0)$-cubical spaces (for instance, a product of trees), then the Julg-Valette complex for $X$ identifies with the tensor product of the Julg-Valette complexes for the factors.  Note that in this case a pair of weight functions, one on each factor, determines a weight function on the product; see Example~\ref{ex-products-and-hyperplanes}.
\end{example}
The Julg-Valette differential in Definition~\ref{def-jv-differential} has a formal adjoint 
\[
\delta_{\mathrm{JV}} \colon \A^{q+1}(X) \longrightarrow \A ^q (X)
\]
with respect to the inner products of Definition~\ref{def-A-space-of-X}, given by the formula
\begin{equation}
\label{eq-def-of-delta}
\delta_{\mathrm{JV}}\colon \beta 
\longmapsto 
\sum_{H\in \Mid (C) }w(H) \cdot \iota_H (\beta) ,
\end{equation}
for $\beta\in A(C)$, where:
\begin{enumerate}[\rm (i)]

\item 
 The sum is over the set $\Mid (C)$  of hyperplanes that cut through $C$ (that is, they include  a midplane from $C$).

\item  On the right-hand side, $\iota_H(\beta)$ is contraction of $\beta$ by the coordinate vector field dual to the coordinate $x_H$, followed by restriction to the face of $C$ where $x_H = 1$.
\end{enumerate}
See \cite[Prop.\,3.19]{BGH19}.  

\begin{definition}
\label{def-julg-valette-op}
The \emph{Julg-Valette operator} is the unbounded operator 
\begin{equation*}
D _{\mathrm{JV}} = d_{\mathrm{JV}} + \delta_{\mathrm{JV}}  
\end{equation*}
on the Hilbert space 
\[
\A^*_{L^2}(X) = \bigoplus_q  \A^q_{L^2}(X),
\]
with domain 
\[
\A^*(X) = \bigoplus _{q} \, \A^q(X) .
\]
\end{definition}

If $\beta\in A(C)$, then it is not difficult to compute that
\begin{equation}
\label{eq-fmla-for-D-squared}
D^2 _{\mathrm{JV} } \beta =  \sum _{H \in \SAH(C)} w(H)^2 \beta + \sum_{H \in \Mid (C)} w(H)^2\beta 
\end{equation}
See \cite[Prop.\,3.25]{BGH19}.   The following result is an easy consequence:

\begin{lemma}[{\cite[Lemmas 9.4 and 9.5]{BGH19}}]
\label{lem-JV-compact-resolvent}
If $w$ is a proper  weight function, then   $D_{\mathrm{JV}}$ is an essentially self-adjoint Hilbert space operator with compact resolvent. \qed
\end{lemma}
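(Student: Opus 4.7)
The plan is to exploit the identity~\eqref{eq-fmla-for-D-squared}, which says that $D^2_{\mathrm{JV}}$ acts on each summand $\A(C)$ of the orthogonal decomposition $\A^*_{L^2}(X)=\bigoplus_C \A(C)$ as multiplication by the non-negative scalar
\[
\lambda(C) \;=\; \sum_{H\in\SAH(C)} w(H)^2 \;+\; \sum_{H\in\Mid(C)} w(H)^2.
\]
In other words, $D^2_{\mathrm{JV}}$ is block-diagonal in the $\A(C)$-decomposition, and both parts of the lemma will follow once the eigenvalues $\lambda(C)$ are shown to go to infinity with $C$.

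For essential self-adjointness I would apply Nelson's analytic vector theorem.  For any $\xi\in\A(C)$, the identity $D^2_{\mathrm{JV}}\xi=\lambda(C)\xi$ together with $\|D_{\mathrm{JV}}\xi\|^2=\langle D^2_{\mathrm{JV}}\xi,\xi\rangle=\lambda(C)\|\xi\|^2$ yields $\|D_{\mathrm{JV}}^n\xi\|=\lambda(C)^{n/2}\|\xi\|$ by induction, so
\[
\sum_{n\ge 0}\frac{\|D_{\mathrm{JV}}^n\xi\|\,t^n}{n!} \;\le\; e^{\sqrt{\lambda(C)}\,t}\,\|\xi\| \;<\;\infty
\]
for every $t>0$.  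Every element of the dense domain $\A^*(X)$ is thus a finite sum of analytic vectors for the symmetric operator $D_{\mathrm{JV}}$, and Nelson's theorem gives essential self-adjointness.  Writing $\bar D_{\mathrm{JV}}$ for the closure, the functional calculus identifies $(\bar D_{\mathrm{JV}})^2$ with the self-adjoint multiplication operator whose $\mu$-eigenspace is $\bigoplus_{\lambda(C)=\mu}\A(C)$, so compactness of the resolvent reduces to the combinatorial claim that $\{C:\lambda(C)\le M\}$ is finite for every $M>0$.

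This finiteness is the main obstacle.  Properness of $w$ makes $\mathcal{H}_0:=\{H:w(H)\le\sqrt M\,\}$ finite, and the inequality $\lambda(C)\le M$ forces $\Mid(C)\cup\SAH(C)\subseteq\mathcal{H}_0$.  Two complementary observations then bound the cubes.  First, for a cube $C$ of dimension $q\ge 1$, the set $\Mid(C)$ of $q$ pairwise-crossing hyperplanes essentially determines $C$ in a bounded-geometry $\CAT(0)$-cubical space, leaving at most finitely many such $C$ for each fixed~$q$.  Second, for a vertex $v\ne P$ the set $\SAH(\{v\})$ is non-empty (any shortest edge-path from $P$ to $v$ contributes its last hyperplane), and a direct combinatorial argument using bounded geometry together with properness of $w$ confines $v$ to a finite subcomplex.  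These two observations are the substance of \cite[Lemmas 9.4 and 9.5]{BGH19}, whose combinatorial proof I would follow.
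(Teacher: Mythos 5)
Your overall reduction is sound and is essentially the route the paper (via \cite{BGH19}) takes: formula \eqref{eq-fmla-for-D-squared} shows that $D^2_{\mathrm{JV}}$ acts on each summand $\A(C)$ by the scalar $\lambda(C)$, your Nelson-type argument for essential self-adjointness is correct, and compactness of the resolvent does come down to showing that $\{\,C : \lambda(C)\le M\,\}$ is finite for every $M>0$, which by properness of $w$ reduces to the combinatorial statement that for every finite set $\mathcal F$ of hyperplanes only finitely many cubes $C$ satisfy $\Mid(C)\cup\SAH(C)\subseteq\mathcal F$.

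The gap lies in how you propose to establish that finiteness. Your first ``observation''---that for $q\ge1$ the set $\Mid(C)$ essentially determines $C$, leaving only finitely many cubes for each fixed $q$---is false. In the standard cubulation of $\R^3$ (a bounded geometry $\CAT(0)$-cubical space) the $2$-cubes $C_n=[0,1]^2\times\{n\}$, $n\in\Z$, all satisfy $\Mid(C_n)=\{\,\{x=\tfrac12\},\{y=\tfrac12\}\,\}$: infinitely many cubes of the same dimension are cut by exactly the same hyperplanes. For these cubes it is the $\SAH$-term that rescues the estimate (one has $\{z=n-\tfrac12\}\in\SAH(C_n)$, and these hyperplanes are pairwise distinct, so properness of $w$ forces their weights to blow up), so the $\SAH$ contribution cannot be discarded for positive-dimensional cubes. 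Similarly, for vertices, nonemptiness of $\SAH(Q)$ is far from sufficient: what must be shown is that for a fixed finite $\mathcal F$ only finitely many vertices $Q$ have $\SAH(Q)\subseteq\mathcal F$, and here your proposal only gestures at ``a direct combinatorial argument'' before deferring to \cite{BGH19}. Since that combined finiteness statement is precisely the nontrivial content of the cited Lemma 9.5 (the remainder of your argument being routine), the decisive step of the proof is missing, and the one concrete combinatorial claim offered in its place is incorrect.
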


The Julg-Valette operator is also nearly equivariant in the sense appropriate to equivariant $KK$-theory:

\begin{lemma} [{\cite[Lemma~9.6]{BGH19}}]
\label{JV-bounded-G-translates}
If a group $G$ acts by automorphisms of $X$, and if the weight function is $G$-adapted, then for every   $g\in G$ the operator 
$D_{\mathrm{JV}} {-} g(D_{\mathrm{JV}})$ extends to a bounded operator on $H$,  and the formula 
\[
g \longmapsto D_{\mathrm{JV}} {-} g(D_{\mathrm{JV}})
\]
defines a strongly continuous, locally bounded,  bounded operator-valued function on $G$. \qed
\end{lemma}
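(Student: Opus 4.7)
The plan is to reduce to the differential $d_\mathrm{JV}$ and then analyze $d_\mathrm{JV} - g(d_\mathrm{JV})$ by a cube-by-cube comparison. The reduction is immediate: because $\delta_\mathrm{JV}$ is the formal adjoint of $d_\mathrm{JV}$ and $g$ acts by unitaries, one has $g(\delta_\mathrm{JV}) = g(d_\mathrm{JV})^*$, so any bound on the extension of $d_\mathrm{JV} - g(d_\mathrm{JV})$ dualizes to a bound on $\delta_\mathrm{JV} - g(\delta_\mathrm{JV})$ with the same $g$-regularity, and summing yields the conclusion for $D_\mathrm{JV} - g(D_\mathrm{JV})$.

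Unwinding $g(d_\mathrm{JV}) = u_g d_\mathrm{JV} u_g^*$ on $\beta \in \A(C)$ yields
\[
g(d_\mathrm{JV})(\beta) \;=\; \sum_{H} w(g^{-1}H) \, \alpha_H^{(gP)} \wedge \beta,
\]
where the sum runs over hyperplanes $H$ adjacent to $C$ that separate $C$ from the \emph{translated} base vertex $gP$, and $\alpha_H^{(gP)}$ denotes the one-form of Definition~\ref{def-coordinate-for-a-hyperplane} computed with $gP$ in place of $P$. Two structural observations then govern the term-by-term comparison with $d_\mathrm{JV}(\beta)$. For any hyperplane $H$ that does \emph{not} separate $P$ from $gP$, both orientation and separation data are unchanged ($\alpha_H^{(gP)} = \alpha_H^{(P)}$, and $H$ separates $C$ from $P$ iff from $gP$), so the only discrepancy is the weight difference $w(H)-w(g^{-1}H)$, bounded in absolute value by $M_g := \sup_H|w(H)-w(g^{-1}H)|$, which is finite by the $G$-adapted hypothesis. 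For the finite set $S_g$ of hyperplanes that \emph{do} separate $P$ from $gP$ — of cardinality $n_g$ equal to the wall distance between $P$ and $gP$ — one has $\alpha_H^{(gP)} = -\alpha_H^{(P)}$ together with disagreement of the two separation conditions, and a direct check shows that each such $H$ contributes at most $W_g^2 := \max_{H\in S_g}\max\{w(H),w(g^{-1}H)\}^2$ to $\|(d_\mathrm{JV}-g(d_\mathrm{JV}))\beta\|^2$.

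Since the forms $\alpha_H^{(P)}\wedge\beta$ lie in pairwise orthogonal summands $\A(D_H)$ as $H$ varies, and bounded geometry supplies a uniform constant $N_0$ on the number of hyperplanes adjacent to any cube in $X$, the two observations combine to give the uniform pointwise estimate
\[
\|(d_\mathrm{JV}-g(d_\mathrm{JV}))\beta\|^2 \;\leq\; \bigl(N_0 M_g^2 + n_g W_g^2\bigr)\|\beta\|^2, \qquad \beta \in \A(C).
\]
A routine Cauchy--Schwarz argument, using that each target cube $D$ receives contributions from at most $\dim X + 1$ codimension-one faces, upgrades this to the bounded extension of $d_\mathrm{JV}-g(d_\mathrm{JV})$ on all of $\A^*_{L^2}(X)$, with norm controlled by $M_g$, $n_g$, $W_g$. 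For the $g$-regularity, the $G$-adapted hypothesis supplies an open subgroup $U \subseteq G$ on which $w$ is invariant, while continuity of the action together with discreteness of the vertex set of $X$ forces $gP = P$ on an open neighborhood of the identity; on the intersection of $U$ with this neighborhood, $M_g$, $n_g$, and $W_g$ all vanish and $D_\mathrm{JV}-g(D_\mathrm{JV})$ is identically zero. Strong continuity at arbitrary $g_0$ then follows by writing $g = g_0 k$ and using $\|g_0(D_\mathrm{JV}-k(D_\mathrm{JV}))u_{g_0}^*\| = \|D_\mathrm{JV}-k(D_\mathrm{JV})\|$, and local boundedness on compact subsets of $G$ follows because $M_g$, $n_g$, $W_g$ are locally bounded functions of $g$. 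The principal obstacle is the orientation bookkeeping: tracking the sign identification $\alpha_H^{(gP)} = \pm\alpha_H^{(P)}$ to confirm that contributions from hyperplanes in $S_g$ remain uniformly bounded in the cube $C$ rather than growing unboundedly — a point that rests on both the finiteness of $S_g$ and bounded geometry.
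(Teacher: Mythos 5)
Your argument is correct and follows essentially the same route as the paper's (i.e.\ \cite[Lemma 9.6]{BGH19} and the parallel proof of Lemma \ref{lem-G-bounded-property-for-de-Rham} here): split the discrepancy into the weight-change part, controlled by $\sup_H|w(H)-w(g^{-1}H)|$ via $G$-adaptedness, and the basepoint-change part, controlled by the finitely many hyperplanes separating $P$ from $gP$, with bounded geometry giving the uniform multiplicity constants. Your treatment of continuity and local boundedness via local constancy ($gP=P$ and $gw=w$ on an open subgroup) is exactly the observation recorded in Remark \ref{rem-continuity-not-an-issue}.
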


\begin{remark}
\label{rem-continuity-not-an-issue}
The continuity and local boundedness of the function in the lemma are automatic, since in fact the function is locally constant: if $g$ lies in a sufficiently small open subgroup of $G$, then $g(D_{\mathrm{JV}}) = D_{\mathrm{JV}}$.
\end{remark}

Therefore,  given any proper and $G$-adpated weight function, we obtain  an unbounded Kasparov cycle for $ KK_G (\C,\C)$ in the sense of Definition~\ref{def-unbounded-cycle}. It is actually independent of the choice of weight function thanks to the homotopy invariance of $KK$-theory, and we shall use the notation
\[
[D_{\mathrm{JV}}]\in KK_G (\C,\C) .
\]
The following is the main technical result from the paper \cite{BGH19}. 

\begin{theorem}[{\cite[Theorem 9.14]{BGH19}}]
\label{thm-JV=1}
Assume that $X$ is a bounded geometry \textup{(}and finite
dimensional\textup{)} $\CAT(0)$-cubical space, and that a locally
compact, second-countable group $G$ acts on $X$ by
automorphisms.  Then
$[D_{\mathrm{JV}}] =1_G \in KK_G (\C,\C)$. \qed
\end{theorem}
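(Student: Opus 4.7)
The plan is to prove $[D_{\mathrm{JV}}] = 1_G$ by constructing an operator homotopy from the Julg-Valette cycle to an exactly $G$-equivariant Fredholm cycle of index one --- the Pytlik-Szwarc cycle mentioned in the introduction.

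First, I would define a Pytlik-Szwarc operator $D_{\mathrm{PS}}$ on the same graded Hilbert space $\A^*_{L^2}(X)$. The Julg-Valette differential $d_{\mathrm{JV}}$ depends on the base vertex $P$ through the coordinates $x_H$, which take the values $0$ or $1$ on the two half-spaces of $H$ according to which side contains $P$. The Pytlik-Szwarc construction replaces this $P$-dependent indicator by a symmetric choice --- morally, centering $x_H$ at $1/2$ on the midplane --- yielding a differential $d_{\mathrm{PS}}$ whose associated operator $D_{\mathrm{PS}} = d_{\mathrm{PS}} + \delta_{\mathrm{PS}}$ is \emph{exactly} $G$-equivariant. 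The cohomology of the symmetric complex remains concentrated in degree zero and one-dimensional, so $D_{\mathrm{PS}}$ is Fredholm of index one. Exact equivariance together with Fredholm index one implies $[D_{\mathrm{PS}}] = 1_G$ directly from the definition of $R(G)$.

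Next, I would connect $D_{\mathrm{JV}}$ to $D_{\mathrm{PS}}$ through a family of unbounded operators $D_t$, $t \in [0,1]$, obtained by replacing each coordinate $x_H$ with a linear interpolation $x_H^t$ between the base-vertex indicator at $t=0$ and the symmetric choice at $t=1$. The analogue of formula \eqref{eq-fmla-for-D-squared} shows that each $D_t^2$ is diagonal on the basis of top-degree forms on cubes, with eigenvalues growing at least like $w(H)^2$ for hyperplanes $H$ adjacent to or cutting through a given cube. Properness of $w$ then gives each $D_t$ compact resolvent, and the bounded transforms $F_t = D_t(1+D_t^2)^{-1/2}$ depend norm-continuously on $t$.

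The main obstacle is verifying that $\{F_t\}$ is a valid $KK_G$-homotopy --- specifically, that $[u_g, F_t]$ is compact for every $g \in G$ and every $t$, with appropriate continuity in $g$. At $t=0$ this is Lemma~\ref{JV-bounded-G-translates}; at $t=1$ it is trivial by exact equivariance; but in between, one must control $D_t - g(D_t)$ uniformly in $t$. The key estimate is that, on any cube $C$, only hyperplanes $H$ with $x_H^t \neq x_{gH}^t \circ g^{-1}$ contribute to $D_t - g(D_t)$; these are controlled by how $g$ permutes hyperplanes relative to $P$, and the linear interpolation ensures the contribution at each $H$ is uniformly bounded in $t$. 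Combined with bounded geometry of $X$ and the $G$-adaptedness of $w$, only finitely many hyperplanes contribute nontrivially near any cube, giving compactness of $[u_g, F_t]$ uniformly in $t$ and strong continuity in $g$. Together with the identification $[D_{\mathrm{PS}}] = 1_G$ above, the homotopy yields $[D_{\mathrm{JV}}] = [D_{\mathrm{PS}}] = 1_G$ in $KK_G(\C,\C)$.
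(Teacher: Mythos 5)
The statement you are proving is not actually reproved in this paper: it is imported wholesale from \cite{BGH19} (Theorem 9.14 there shows $[D_{\mathrm{JV}}]=[D_{\mathrm{PS}}]$, and the remark following the statement notes that $[D_{\mathrm{PS}}]=1_G$ for elementary reasons). Your top-level strategy --- homotope the Julg--Valette cycle to an exactly equivariant index-one ``Pytlik--Szwarc'' cycle --- is the same as that of \cite{BGH19}. But the execution has a genuine gap at its first and most important step: the operator $D_{\mathrm{PS}}$ you describe cannot exist. You propose an \emph{exactly} $G$-equivariant, odd, self-adjoint Fredholm operator on the same Hilbert space $\A^*_{L^2}(X)$, carrying the same permutation representations as the Julg--Valette cycle, whose class is $1_G$. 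When $G$ acts properly (say a free group on its tree) these permutation representations are weakly contained in the regular representation; if $G$ is nonamenable, such representations contain no nonzero finite-dimensional subrepresentations (a finite-dimensional subrepresentation $\pi$ would give $\pi\prec\lambda$, hence $1\subseteq\pi\otimes\bar\pi\prec\lambda$, i.e.\ amenability). An exactly equivariant Fredholm cycle has class $[\ker F^+]-[\ker F^-]$, a difference of finite-dimensional subrepresentations, which is therefore forced to be $0$, not $1_G$. This is precisely the tension the introduction highlights: the Julg--Valette cycle has weakly regular representations but is only approximately equivariant, while the Pytlik--Szwarc cycle is exactly equivariant but lives on \emph{different} (non-weakly-regular) representations, obtained in \cite{BGH19} by a Pytlik--Szwarc-type deformation of the representations themselves, not by modifying the coordinates $x_H$.

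Relatedly, the mechanism ``center $x_H$ at $1/2$'' does not remove the basepoint dependence: $d_{\mathrm{JV}}$ depends on $P$ chiefly through the combinatorial choice of which hyperplanes enter the sum, namely $\SAH(C)$ (adjacent hyperplanes separating $C$ from $P$), and secondarily through the orientation of $\alpha_H$, which flips sign under any $g$ exchanging the two sides of $H$ regardless of how you normalize the affine coordinate. Summing instead over \emph{all} adjacent hyperplanes destroys the acyclicity/one-dimensional-kernel structure and (with a $P$-based proper weight) still fails equivariance, so the endpoint of your interpolation $x_H^t$ is not equivariant and the subsequent claims (diagonal $D_t^2$ with eigenvalues $\gtrsim w(H)^2$, uniform control of $D_t-g(D_t)$) have nothing to attach to. The substantive content of \cite[Theorem 9.14]{BGH19} is exactly the construction of the deformed family of representations and the careful homotopy between the Julg--Valette cycle and the genuinely equivariant Pytlik--Szwarc cycle; a straight-line interpolation of the coordinate functions on a fixed Hilbert space does not reproduce it.
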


\begin{remark}
Actually, Theorem 9.14 in \cite{BGH19} proves that  $[D_{\mathrm{JV}}]$  is equal to the Pytlik-Szwarc element $[D_{\mathrm{PS}}]$ discussed in Section~1 of this paper.   But as we noted there, the Pytlik-Szwarc element is in turn equal to the identity element $1_G$ for elementary reasons.
\end{remark}


\section{A De Rham-Type Complex}

In this section we shall use differential forms and the de Rham operator to construct a new unbounded  Kasparov cycle $D_{\mathrm{dR}}$
for the Kasparov group $KK_G (\C,\C)$.  In the next section we shall show that 
$[D_{\mathrm{dR}}] = [D_{\mathrm{JV}}]$ and that, in the case of a
proper and cocompact action, the bounded transform of $D_{\mathrm{dR}}$ has Property $(\gamma)$.
 
We  begin by reviewing a few simple facts about the de Rham differential on a  single cube $C$ in $X$. 

\begin{definition}
For $p{\ge} 0$, we shall denote by $\Omega^p(C)$ the space of smooth differential $p$-forms on $C$, by  $\Omega_{L^2}^{p}(C)$ the Hilbert space of $L^2$-differential $p$-forms on $C$, by $\Omega^p_0(C)\subseteq \Omega^p(C)$ the space of those smooth $p$-forms on $C$ that pull back to zero on each open face of $C$, and by $\Omega^p_{00}(C)$ the space of smooth forms that are compactly supported in the interior of $C$.
\end{definition}

We shall consider the de Rham differential $d$ as an unbounded Hilbert space operator from $\Omega^p_{L^2}(C)$ to $\Omega^{p+1}_{L^2}(C)$ with domain $\Omega_0^p(C)$.  If we denote  by  $d^\diamond$ the  formal adjoint of $d$ in the sense of partial differential equations, so that 
\begin{equation}
\label{eq-adjoint-relation}
\langle d \sigma, \tau \rangle = \langle \sigma, d^\diamond\tau \rangle
\end{equation}
for all $\sigma\in \Omega^p_{00}(C)$ and all $\tau\in \Omega^{p+1}(C)$, then in fact the relation \eqref{eq-adjoint-relation} holds for all $\sigma\in \Omega^p_{0}(C)$ and all $\tau\in \Omega^{p+1}(C) $.  Indeed, if we choose an orientation on $C$ and introduce the associated Hodge $\star$-operator, then for all $\sigma \in \Omega^p(C)$ and all $\tau\in \Omega^{p+1}(C)$,
\[
\langle d \sigma, \tau \rangle - \langle\sigma, d^\diamond\tau \rangle
=
\int_{\partial C} \overline \sigma\wedge \star  \tau .
\]
Henceforth we shall consider $d^{\diamond}$ as an unbounded Hilbert space operator with domain $\Omega^{p+1}(C)$. Form  the direct sums 
\[
\Omega^* _0 (C) = \bigoplus_p  \Omega^p_0(C)
\quad \text{and} \quad
\Omega^* _{L^2}(C) = \bigoplus_p  \Omega^p_{L^2}(C) ,
\]
 and then form the unbounded operator
\begin{equation}
\label{eq-de-Rham-op-1}
D = d+d^\diamond \colon \Omega^* _{L^2} (C) \longrightarrow \Omega^*_{L^2} (C)
\end{equation}
whose domain is the intersection of the domains of $d$ and $d^\diamond$, which is $\Omega^*_0(C)$. This  is a symmetric operator.

\begin{lemma}
\label{lem-ess-self-adj-1}
The operator $D$ in 
\eqref{eq-de-Rham-op-1} is essentially self-adjoint, and the self-adjoint closure of $D$ has compact resolvent.  The kernel of the closure is precisely the one-dimensional space $\A(C)$ of top-degree constant coefficient forms.
\end{lemma}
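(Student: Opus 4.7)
The plan is to exploit the product structure $C \cong [0,1]^q$ and reduce the whole lemma to the one-dimensional case via a graded tensor product argument.

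First, I would verify that for a product $C = C' \times C''$ the entire package $(\Omega^*_0, \Omega^*_{L^2}, d, d^\diamond, D)$ decomposes as a graded tensor product of the corresponding packages on the factors. The key point is that the boundary condition factorises: a face of $C' \times C''$ has the form $F' \times C''$ or $C' \times F''$, and a decomposable form $\alpha \otimes \beta$ pulls back to zero on $F' \times C''$ if and only if $\alpha$ pulls back to zero on $F'$, all other contributions vanishing automatically for dimension reasons. Hence $\Omega^*_0(C) \cong \Omega^*_0(C') \otimes \Omega^*_0(C'')$ as graded inner product spaces, and the standard Leibniz computations give $d = d' \otimes 1 + \gamma' \otimes d''$ and $d^\diamond = (d')^\diamond \otimes 1 + \gamma' \otimes (d'')^\diamond$, with $\gamma'$ the parity grading on the first factor. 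Consequently $D = D' \otimes 1 + \gamma' \otimes D''$ and $D^2 = (D')^2 \otimes 1 + 1 \otimes (D'')^2$, the cross term vanishing because $D'$ is odd and so anti-commutes with $\gamma'$.

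Second, I would handle the one-dimensional model $C = [0,1]$ by explicit Fourier decomposition. Here $\Omega^*_0([0,1])$ consists of pairs $(f, g\,dx)$ with $f$ smooth satisfying $f(0) = f(1) = 0$, and $g$ smooth with no further constraint, since any one-form pulled back to a point vanishes automatically. The Dirichlet sine basis $e_n = \sqrt{2}\sin(n\pi x)$, $n \geq 1$, is an orthonormal basis of $\Omega^0_{L^2}([0,1])$ lying inside $\Omega^0_0([0,1])$; the Neumann cosine basis $f_n = \sqrt{2}\cos(n\pi x)\,dx$, $n \geq 1$, together with $f_0 = dx$, is an orthonormal basis of $\Omega^1_{L^2}([0,1])$, obviously inside $\Omega^1_0([0,1])$. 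A direct computation yields $De_n = n\pi f_n$ and $Df_n = n\pi e_n$ for $n \geq 1$, and $Df_0 = 0$. Thus $D$ is essentially self-adjoint, has compact resolvent with eigenvalues $\pm n\pi$ of finite multiplicity, and its kernel is one-dimensional, spanned by $dx$, the unique top-degree constant coefficient form.

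Finally, I would assemble the $q$-dimensional case by taking tensor products of the one-dimensional bases. The resulting orthonormal basis of $\Omega^*_{L^2}(C)$ lies entirely inside $\Omega^*_0(C)$ and consists of joint eigenvectors of $D^2$ with eigenvalues $\pi^2(n_1^2 + \cdots + n_q^2)$, which have finite multiplicity and tend to infinity. Moreover $D$ itself preserves the $2^q$-dimensional block of the basis attached to each fixed tuple $(n_1, \ldots, n_q)$, so on the algebraic span $D$ decomposes as an orthogonal direct sum of symmetric, hence self-adjoint, finite-dimensional operators, establishing essential self-adjointness and compact resolvent. The kernel is the tensor product of the one-dimensional kernels, i.e.\ the line spanned by $dx_1 \wedge \cdots \wedge dx_q$, which is exactly $\A(C)$. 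The only mild technical point is promoting diagonalisability of $D$ on the algebraic Fourier span to essential self-adjointness on all of $\Omega^*_0(C)$, but this is immediate: the closure of $D$ on the Fourier span is already self-adjoint, and any symmetric extension of a self-adjoint operator equals it, so $D|_{\Omega^*_0(C)}$ shares the same closure.
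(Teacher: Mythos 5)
Your proposal is correct and follows essentially the same route as the paper: the paper's proof works directly with the spans of product forms $\sigma_1\wedge\cdots\wedge\sigma_q$, each $\sigma_j$ a sine function or a cosine multiple of $dx_{H_j}$, which are exactly the tensor products of your one-dimensional Fourier bases, and it observes that these finite-dimensional $D$-invariant blocks orthogonally decompose $\Omega^*_{L^2}(C)$ with $D^2$ scalar on each. Your treatment is merely a bit more explicit about the reduction to the interval and about upgrading block-diagonalizability to essential self-adjointness via maximality of symmetric extensions, steps the paper compresses into ``the lemma follows from these facts.''
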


\begin{proof}
Let $q$ be the dimension of $C$ and let $H_1,\dots, H_q$ be the hyperplanes that intersect $C$.  Let $k_1,\dots, k_q$ be nonnegative integers, and form the span of all differential forms of the type 
\[
\sigma_1\wedge\cdots \wedge \sigma_q,
\]
 where each $\sigma_j$ is either $\sin (\pi k_j x_{H_j})$ or $\cos(\pi k_j x_{H_j})d_{X_j}$.  The span is a finite-dimensional subspace of $\Omega^*_0(C)$; it is invariant under $D$; the square of $D$ on the span is the scalar multiple
\[
\pi^2 k_1^2+ \cdots + \pi^2 k_q^2
\]
of the identity operator; and the family of all such spans (as the list of integers $k_1,\dots, k_q$ varies) is an orthogonal  decomposition of $\Omega^*_{L^2} (C)$.  The lemma follows from these facts.
\end{proof}

Let us now fix, for the rest of this section, a proper and $G$-adapted weight function.  For each cube $C$ in $X$ define an affine function
$
w_C\colon C \to \R
$
by the formula 
\begin{equation}
\label{eq-def-wc}
w_C = \sum _{H\in \Mid (C)} w(H) x_H \;, 
\end{equation}
and form  the Witten-type perturbation
\[
d_w = e^{-w_C}d e^{w_C} \colon \Omega^*_0(C) \longrightarrow \Omega^*(C) .
\]
It follows from the definition   that 
\begin{equation}
\label{eq-perturbed-de-rham}
d_{ w}\colon \beta \longmapsto  d\beta  +  \sum_{H\in \Mid (C)}w(H)dx_H \wedge \beta  ,
\end{equation}
so that $d_w$ is in fact a bounded perturbation of $d$. Let $d_w^\diamond$ be its formal adjoint, considered as an unbounded operator with  domain $\Omega^*(C)$, and then form the Witten-type de Rham operator 
\begin{equation}
\label{eq-def-witten-de-rham}
D_w = d_w + d_w^\diamond \colon \Omega_{L^2}^*(C) \longrightarrow \Omega_{L^2}^*(C)
\end{equation}
with the same domain as $D$, namely $\Omega^*_0(C)$.  Being a bounded perturbation of an essentially self-adjoint operator with compact resolvent,  the operator $D_w$ is itself essentially self-adjoint with compact resolvent; see   \cite[Ch.\ 4 Theorem\ 3.1.11]{Kato95} and \cite[Ch.\ 5, Theorem\ 4.4.4]{Kato95}.

We are ready now to proceed from a single cube to the cubical space $X$.

 \begin{definition}
 \label{def-omega-and-omega-zero}
Denote by $\Omega^*_{L^2}(X)$ the Hilbert space direct sum
\[
\Omega^{\ast}_{L^2}(X) = \bigoplus_{C} \Omega_{L^2}^{*}(C)
\]
over  all the cubes in $X$ of all dimensions, and denote by 
\[
\Omega^{\ast}_{0}(X) \subseteq  \Omega_{L^2}^{*}(X)
\]
the algebraic direct sum of the spaces $\Omega^*_0(C)$.
\end{definition}

 Our general aim is to assemble the operators  $D_w$   in \eqref{eq-def-witten-de-rham} into one essentially self-adjoint    operator with compact resolvent on the full space $\Omega^*_{L^2}(X)$.  However this requires some care, as the following computation shows.

   Let  $C$ be a one-dimensional cube and let $H$ be the hyperplane that intersects $C$.  For each integer $k > 0$ the span  in $\Omega^*_0(C)$  of the forms 
\[
\sin (\pi k x_H) \quad \text{and} \quad 
 d_w \sin (\pi k x_H) = \pi k \cos(\pi k  x_{H})d_{x_{H}} {+} w(H) \sin (\pi k x_{H}) d_{x_{H}} 
\]
is invariant under the action of $D_w$, and furthermore  
\[
D^2_w = \pi ^2 k^2 + w(H)^2
\]
on the span.  The spans for distinct $k$ are orthogonal to one another, and their mutual orthogonal complement in $\Omega^*_{L^2} (C)$ is spanned by
\[
e^{w(H)x_H} dx_H\in \Omega^*_0(C),
\]
which lies in the kernel of $D_w$, and indeed spans it.  It follows that
\[
\operatorname{Spectrum} (D_w^2) = 
 \{\,  0\,  \} \,\, \sqcup \,\,  \{ \, \pi^2 k^2 + w(H) ^2\, : \, k =1,2\dots \,  \} 
\]
(we mean here the spectrum of the square of the self-adjoint extension of $D_w$). 

We find that in the case of a tree, for example,  the operator $D_w$ on each cube has a one-dimensional kernel, and in order to construct a single Witten-type de Rham operator on the tree we need to ``cancel'' these kernels against one another so as to obtain an essentially self-adjoint, compact resolvent operator on $\Omega^*_{L^2}(X)$. The next two definitions are dedicated to solving this problem (for $\CAT(0)$-cubical spaces in all dimensions).

\begin{definition} 
\label{def-e-differential}
If $D$ is any cube in $X$, and if $H$ is any hyperplane that intersects $D$, then let   
\[
y_H = x_H -1  \colon D \longrightarrow \R ,
\]
  where $x_H$ is as in Definition~\ref{def-coordinate-for-a-hyperplane}.  Observe that 
\[
dy_H = \alpha _H = dx_H. 
\]
Define a linear operator 
\[
e_{w}\colon \Omega^p(X) \longrightarrow  \Omega^{p+1}(X)
\]
by the formula 
\[
e_w\colon \beta  \longmapsto \sum_{H\in \SAH(C)} w(H )\cdot e^{w(H)y_H}\cdot \alpha _H \wedge \beta
\]
for $\beta \in \Omega^*(C)$, where:
\begin{enumerate}[\rm (i)]

\item As in Definition~\ref{def-jv-differential},  $\SAH(C)$ is the set of all hyperplanes that are adjacent to $C$  and   separate $C$ from the base vertex.

\item
For a hyperplane $H\in \SAH(C)$, if $D_H$ is the cube that is intersected by $H$, and that includes $C$ as a codimension-one face, then we view $\alpha_H\wedge \beta$ as a differential form on  $D_H$ by pulling back $\beta$ along the orthogonal projection from $D_H$ to  $C$.
\end{enumerate}

 \end{definition}

\begin{definition} 
\label{def-de-Rham-op}
We shall denote by  $D_{\mathrm {dR}}$ the following symmetric operator on $\Omega^{\ast}_{L^2}(X)$ with domain $\Omega^*_0(X)$:
\[
 D_{ \mathrm{dR}} = d_{ w} + e_{w} + d_{w}^\diamond + e_{ w}^\diamond .
\]
By $d_w$ we mean here the direct sum of all the Witten-type operators \eqref{eq-perturbed-de-rham} over all the cubes of $X$, and the same for the formal adjoint $d_w^\diamond$.
\end{definition}
 
We shall prove the following result:

\begin{theorem}
\label{thm-de-Rham-compact-resolvent-1}
If the weight function $w$ is proper, then the operator $D_{\mathrm{dR}}$ is essentially self-adjoint and has compact resolvent.
\end{theorem}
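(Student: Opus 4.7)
The plan is to analyze $D_{\mathrm{dR}}$ via the orthogonal splitting of $\Omega^*_{L^2}(X)$ induced by the local kernels of the Witten-deformed operators $D_w^C$. From Lemma~\ref{lem-ess-self-adj-1} and the bounded Witten-perturbation remark following~\eqref{eq-def-witten-de-rham}, each $D_w^C$ is essentially self-adjoint on $\Omega^*_0(C)$ with compact resolvent, one-dimensional kernel $\C\cdot e^{w_C}\omega_C$, and nonzero spectrum at distance $\ge\pi$ from zero. Setting $\mathcal{K}=\bigoplus_C \ker(D_w^C)$, I would first verify that both $D_w=\bigoplus_C D_w^C$ and $E=e_w+e_w^\diamond$ preserve the splitting $\mathcal{K}\oplus\mathcal{K}^\perp$, so that $D_{\mathrm{dR}}=D_w+E$ decomposes as an orthogonal direct sum of its restrictions.

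The invariance of this splitting rests on the identity $\tilde d^2=0$ for $\tilde d=d_w+e_w$: here $d_w^2=0$ is local, $e_w^2=0$ follows from $\alpha_H\wedge\alpha_{H'}=-\alpha_{H'}\wedge\alpha_H$, and the anticommutator $\{d_w,e_w\}=0$ is a short calculation using the identity $dw_{D_H}=dw_C+w(H)\alpha_H$ on a cube $D_H$ containing $C$ as a codimension-one face. The invariance of $\mathcal{K}^\perp$ under $e_w$ then reduces to the computation, for $\beta=f\omega_C\in\Omega^{\mathrm{top}}(C)$ and $H\in\SAH(C)$,
\[
\langle e_w\beta,\,e^{w_{D_H}}\omega_{D_H}\rangle_{D_H}\;=\;\pm\sinh(w(H))\int_C f\,e^{w_C}\,dV,
\]
which vanishes exactly when $\beta\perp e^{w_C}\omega_C$; for $\beta$ of degree strictly less than $\dim C$ the pairing is zero for degree reasons. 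On $\mathcal{K}$ one has $D_w=0$, so $D_{\mathrm{dR}}|_{\mathcal{K}}=E|_{\mathcal{K}}$; in the normalized basis $\tilde k_C=e^{w_C}\omega_C/\|e^{w_C}\omega_C\|$, using $\|e^{w_C}\omega_C\|^2=\prod_{H\in\Mid(C)}\frac{e^{2w(H)}-1}{2w(H)}$, an elementary computation shows that the matrix entries of $E|_{\mathcal{K}}$ coupling $\tilde k_C$ to $\tilde k_{D_H}$ are asymptotically $\sqrt{w(H)/2}$. This is a Julg--Valette-type operator with weights of order $\sqrt{w}$, for which essential self-adjointness and compact resolvent follow from properness of $w$ exactly as in Lemma~\ref{lem-JV-compact-resolvent}.

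The delicate case is $D_{\mathrm{dR}}|_{\mathcal{K}^\perp}$. While $D_w|_{\mathcal{K}^\perp}$ is self-adjoint with cube-by-cube nonzero spectrum in $[\pi,\infty)$, it need not have compact resolvent on $\mathcal{K}^\perp$: already for a product of two trees one finds cubes $C$ arbitrarily far from the base vertex with $\min_{H\in\Mid(C)}w(H)$ bounded, so the cube-wise lower bound on $D_w^2$ does not grow. The required compactness must therefore come from the coupling $E|_{\mathcal{K}^\perp}$. Using $D_{\mathrm{dR}}^2=\tilde d\,\tilde d^\diamond+\tilde d^\diamond\tilde d$ and the explicit formula for $e_w$, I would show that $D_{\mathrm{dR}}^2|_{\mathcal{K}^\perp}$ is bounded below cube-wise by a multiplication operator tending to infinity in $w$, thereby obtaining both essential self-adjointness and compact resolvent. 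I expect this last step to be the main technical obstacle.
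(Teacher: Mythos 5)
Your structural setup is sound and the parts you compute are correct: the splitting $\mathcal{K}\oplus\mathcal{K}^\perp$ is indeed preserved (this is essentially Lemmas~\ref{lem-invariance-of-A-s1} and \ref{lem-invariance-of-A-s2} of the paper at $s=1$, used there to define the projection $\mathcal{P}$), and your matrix entries $\sqrt{\tfrac12 w(H)(1-e^{-2w(H)})}\sim\sqrt{w(H)/2}$ on $\mathcal{K}$ are exactly the square roots of the first entries of the eigenvalue list \eqref{eq-eigenvalues-of-dR-squared}, so the $\mathcal{K}$-part does reduce to a Julg--Valette operator with a proper modified weight. You have also correctly spotted the real difficulty: $D_w$ alone is not enough on $\mathcal{K}^\perp$, and the coupling $e_w+e_w^\diamond$ must be used. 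But that is precisely the step you have not carried out, and your proposed way of finishing it is logically insufficient on two counts. First, a cube-wise (or block-wise) lower bound $\|D_{\mathrm{dR}}\beta\|^2\ge\langle\beta,M\beta\rangle$ on a \emph{symmetric} operator does not give essential self-adjointness: for example $-i\,d/dx$ on $[0,1]$ with domain the smooth functions vanishing at both endpoints satisfies $\|f'\|\ge\pi\|f\|$ yet has deficiency indices $(1,1)$. Second, even granting self-adjointness, such a lower bound does not give compact resolvent, because each summand $\Omega^*_{L^2}(C)$ is infinite dimensional; a multiplication operator that is constant on each cube and tends to infinity only as the cube recedes controls the ``far away'' part, and one still needs discreteness of the spectrum coming from the finitely many near cubes, where $D_{\mathrm{dR}}$ does not preserve the cube-wise decomposition. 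So the statement ``thereby obtaining both essential self-adjointness and compact resolvent'' does not follow from what you propose to prove, and the proof of the lower bound itself is only announced.

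For comparison, the paper avoids both problems by a different decomposition: $\Omega^*_{L^2}(X)=\bigoplus_Q\Omega^*_{L^2}(X)_Q$ over vertices $Q$, where the block at $Q$ is spanned by the cubes leading $Q$ to the base vertex $P$; $D_{\mathrm{dR}}$ is block-diagonal for this decomposition, and each block factors as a graded tensor product \eqref{eq-pieces-decomp-hill2} of the two-term ``interval'' operators \eqref{eq-interval-operator}, one for each $H\in\SAH(Q)$. Each interval operator is diagonalized explicitly, with eigenvalue squares \eqref{eq-eigenvalues-of-dR-squared}, so every block has an orthonormal basis of eigenvectors lying in the domain (which yields essential self-adjointness directly) and the eigenvalue squares are sums $\sum_j\lambda_j$ with $\lambda_j\ge\tfrac12 w(H_j)(1-e^{-2w(H_j)})$, whence compact resolvent by properness. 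Note that this tensor factorization treats your $\mathcal{K}$ and $\mathcal{K}^\perp$ simultaneously: your $\mathcal{K}$-part corresponds to choosing the two-dimensional coupled kernel in every tensor factor, and your problematic $\mathcal{K}^\perp$-part to choosing at least one $\sin/\cos$ eigenspace. If you want to keep your splitting, the honest way to finish the $\mathcal{K}^\perp$ case is essentially to reproduce this block-plus-tensor analysis (or an equivalent explicit spectral decomposition); as it stands, the central estimate and the two deductions drawn from it constitute a genuine gap.
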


For the purposes of the proof, and for other purposes as well, it will be convenient to decompose $\Omega_{L^2}^*(X)$ into  a direct sum of ``blocks'' in such a way that the operator $D_{dR}$ is block-diagonal.

\begin{definition}
  Let  $Q$ be a vertex of $X$. We shall say that a cube $C$
  \emph{leads $Q$ to the base vertex $P$} if the following conditions hold:  
\begin{enumerate}[\rm (i)]

\item The cube $C$ includes the vertex $Q$.

\item All the hyperplanes that intersect $C$ also separate $Q$ from
  $P$.   That is, $\Mid(C)\subseteq \SAH(Q)$. 

\end{enumerate}
We shall denote by 
\[
\Omega^{\ast}_{L^2}(X)_Q \subseteq \Omega^*_{L^2}(X)
\]
  the direct sum  of the spaces   $\Omega^*_{L^2}(C)$ over those cubes   $C$ that lead $Q$ to $P$.  We define $\Omega^*_0(X)_Q$ and $\Omega^*(X)_Q$ similarly.
\end{definition}

\begin{remark} 
\label{rem-normal-cube-path}
For any vertex $Q$, the collection $\SAH(Q)$ coincides with collection $\Mid(D)$ for  a distinguished cube $D$  in $X$, namely the first cube in the normal cube path from $Q$ to $P$ in the sense of \cite[Sec.\ 3]{NibloReeves98}.  The cubes in $X$ that lead $Q$ to $P$ are the faces of  $D$ that include the vertex $Q$. The union of all such cubes  is isometric to a product $(0,1]^q$ of half-open intervals, in such a way that  the coordinates $x_H$   are the standard coordinates on $(0,1]^q$.
\end{remark}

 We have the following Hilbert space direct sum decomposition:
\begin{equation}
\label{eq-normal-cube-decomp}
\Omega^{\ast}_{L^2}(X)=\bigoplus_{Q} \Omega^{\ast}_{L^2}(X)_Q.
\end{equation}
This is because  each cube $C$  in $X$ has a unique vertex $Q$  that is separated from the base vertex by all the hyperplanes intersecting $C$.  Inspecting the formulas, we see immediately that: 

\begin{lemma} 
The operator $D_{\mathrm{dR}}$ is block-diagonal with respect to the Hilbert space decomposition \eqref{eq-normal-cube-decomp}.
\qed 
\end{lemma}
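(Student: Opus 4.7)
The plan is to verify that each of the four summands comprising $D_{\mathrm{dR}} = d_w + d_w^\diamond + e_w + e_w^\diamond$ preserves each subspace $\Omega^*_0(X)_Q$ separately. The terms $d_w$ and $d_w^\diamond$ act cube-by-cube: a form $\beta \in \Omega^*_0(C)$ produces $d_w\beta, d_w^\diamond\beta \in \Omega^*(C)$, supported on the same cube $C$. Since the cube is unchanged, the property of leading $Q$ to $P$ is preserved trivially, and there is nothing else to check for these two terms.

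The substance is therefore in the two non-local operators $e_w$ and $e_w^\diamond$. For $e_w$, I would start with $\beta \in \Omega^*(C)$ where $C$ leads $Q$ to $P$, and check that every cube $D_H$ appearing in Definition~\ref{def-e-differential} also leads $Q$ to $P$. This reduces to two observations: first, $Q \in C \subseteq D_H$; and second, the midplanes of $D_H$ are $\Mid(D_H) = \Mid(C) \cup \{H\}$, and this set is contained in $\SAH(Q)$, because $\Mid(C) \subseteq \SAH(Q)$ by hypothesis while $H \in \SAH(C) \subseteq \SAH(Q)$, since a hyperplane separating $C$ from $P$ a fortiori separates $Q$ from $P$.

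For $e_w^\diamond$ I would argue dually. Following the same pattern as the passage from $d_{\mathrm{JV}}$ to $\delta_{\mathrm{JV}}$ in Section~3, the formal adjoint takes a form on a cube $D$ to forms supported on the codimension-one faces $C$ of $D$ of the form $\{x_H = 1\}$ for $H \in \Mid(D)$. Assuming $D$ leads $Q$ to $P$, the vertex $Q$ is the unique vertex of $D$ lying on the side $x_H = 1$ for every $H \in \Mid(D)$, so in particular $Q \in C$; and $\Mid(C) = \Mid(D) \setminus \{H\} \subseteq \SAH(Q)$. Hence $C$ leads $Q$ to $P$, and $e_w^\diamond \beta$ lies in $\Omega^*(X)_Q$.

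I do not foresee any real obstacle: the statement is essentially a bookkeeping check comparing the geometric range of each of the four operators against the combinatorial definition of the blocks $\Omega^*_{L^2}(X)_Q$. The only point requiring a bit of attention is the asymmetry between the two codimension-one faces of $D_H$ disjoint from $H$; the operator $e_w$ lands on the face separated from $P$ by $H$, by virtue of the choice $\alpha_H = dx_H$ and the weight $e^{w(H)y_H}$, and this is precisely the face containing $Q$ when $D_H$ leads $Q$ to $P$. The dual assertion for $e_w^\diamond$ rests on the same asymmetry, and the four verifications together give the lemma.
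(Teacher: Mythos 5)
Your verification is correct and is exactly the "inspection of the formulas" that the paper leaves implicit: $d_w$ and $d_w^\diamond$ are cube-local, while for $e_w$ and $e_w^\diamond$ the target cubes $D_H$ (respectively the faces $\{x_H=1\}$) still lead $Q$ to $P$, using the same observation the paper records just before the lemma, namely that $Q$ is the unique vertex of the cube separated from $P$ by all hyperplanes cutting it. No difference in approach and no gaps; this matches the paper's intended (one-line) argument.
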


To begin the proof of Theorem~\ref{thm-de-Rham-compact-resolvent-1}, let  us examine   the   case of a vertex $Q$ for which $\SAH(Q)$ consists of a single hyperplane $H$, so that 
\begin{equation}
\label{eq-simple-summand}
\Omega^*_0(X)_Q = \Omega_0^*(Q) \oplus \Omega^*_0(E),
\end{equation}
where the edge $E$ leads $Q$ toward $P$.  A little more generally, let  $Q$ be any vertex and let $E$ be any edge leading   $Q$ to $P$, and consider the operator 
\begin{equation}
\label{eq-interval-operator}
\begin{bmatrix}
0 & e^\diamond \\ e & d_w{+}d_w^\diamond
\end{bmatrix}
\colon 
 \Omega_{L^2}^*(Q) \oplus \Omega^*_{L^2}(E)
\longrightarrow 
 \Omega_{L^2}^*(Q) \oplus \Omega^*_{L^2}(E)
\end{equation}
with domain $\Omega_0^*(Q) \oplus \Omega^*_0(E)$, where $e$ acts as 
\[
\Omega^0_0(Q) \ni 1 \longmapsto w(H) e^{w(H)y_H} dx_H \in \Omega^1_0(E).
\]
In the special case described by  \eqref{eq-simple-summand}, this is the restriction of the operator $D_{\mathrm{dR}}$ to $\Omega^*_{L^2}(X)_Q$.
The images of symmetric operators
\[
\begin{bmatrix}
0 & e^\diamond \\ e & 0 
\end{bmatrix}
\quad \text{and}\quad 
\begin{bmatrix}
0 & 0 \\ 0 & d_w{+}d_w^\diamond
\end{bmatrix}
\]
are orthogonal to one another, and indeed the image of the first is precisely the kernel of the second.  The kernel   is spanned by 
\[
1\in \Omega^0_0(Q) \quad \text{and} \quad e^{w(H)x_H} dx_H \in \Omega^1_0(E),
\]
and the first operator not only leaves this space invariant, but its square there is 
\[
w(H)^2 \cdot \| e^{-y_H}\|^2 _{L^2 (0,1)}
	= \tfrac 12 w(H)(1 - e^{-2w(H)}) .
\]
As a result there is a Hilbert space orthonormal basis consisting of eigenvectors in the domain of \eqref{eq-interval-operator}, and the squares of the eigenvalues  are the scalars in the list 
\begin{equation}
\label{eq-eigenvalues-of-dR-squared}
\bigl \{ \, \tfrac 12 w(H)(1 - e^{-2w(H)}) \,\bigr  \} 
\,\,\sqcup\,\, 
\bigl \{ \, \pi^2 k^2 + w(H) ^2\, : \, k =1,2\dots \,\bigr  \} .
\end{equation}
Let us note for later use that 
\begin{equation}
\label{eq-simple-inequality}
w(H)^2 \ge \tfrac 12 w(H)(1 - e^{-2w(H)}) 
\end{equation}
no matter what the value of $w(H){>}0$, so that the least eigenvalue of the square is always the first scalar in the list \eqref{eq-eigenvalues-of-dR-squared}.

\begin{proof}[Proof of Theorem~\ref{thm-de-Rham-compact-resolvent-1}]
Fix a  vertex $Q$ and let  $H_1,\dots, H_q$ be  the hyperplanes adjacent to $Q$ and separating $Q$ from $P$.  Denote by $E_j$ the unique edge leading $Q$ to $P$ that is cut by $H_j$.  There is then a unitary  isomorphism of graded Hilbert spaces
\begin{equation}
\label{eq-pieces-decomp-hill2}
   \underset{j}{\widehat{\bigotimes}}\Bigl  ( \Omega_{L^2}^*(Q)\oplus \Omega^*_{L^2}(E_j)   \Bigr )
\stackrel\cong \longrightarrow \Omega^*_{L^2}(X)_Q 
\end{equation}
given by exterior multiplication of forms, as follows.  If $C$ is a cube that leads $Q$ to $P$, then define a map
\[
\Omega^*_{L^2}(Q)\oplus \Omega^*_{L^2}(E_j)
\longrightarrow \Omega^*_{L^2}(C)
\]
by pulling back forms from $E_j$ to $C$ along the orthogonal projection, when $E_j$ is included in $C$, and by pulling back from the point $Q$ when $E_j$ is not included in $C$.
The morphism \eqref{eq-pieces-decomp-hill2}  is defined by  pulling back forms in this way, then forming the exterior products of the pullbacks.

 The isomorphism \eqref{eq-pieces-decomp-hill2} restricts to an inclusion 
\begin{equation}
\label{eq-pieces-decomp-hill3}
   \underset{j}{\widehat{\bigotimes}}\Bigl  ( \Omega_{0}^*(Q)\oplus \Omega^*_{0}(E_j)   \Bigr )
\stackrel\cong \longrightarrow \Omega^*_{0}(X)_Q 
\end{equation}
Denote by 
\[
D_{j} \colon  \Omega_{0}^*(Q)\oplus \Omega^*_{0}(E_j)  \longrightarrow \Omega^*(Q) \oplus \Omega^*(E_j)
\]
  the operator in \eqref{eq-simple-summand}.
The operator $D_{\mathrm{dR}}$ on $\Omega^*(X)_Q$ identifies, via \eqref{eq-pieces-decomp-hill2}, with the sum 
\[
\sum _j I\widehat \otimes   \cdots \widehat \otimes I  \widehat \otimes D_{j}\widehat \otimes I \widehat \otimes \cdots \widehat \otimes I ,
\]
from which we see that  there is  an orthornormal basis for  $\Omega^*_{L^2}(X)_Q$ consisting of eigenvectors for   $D_{\mathrm{dR}}$ in $\Omega^*_{0}(X)_Q$, and indeed in the image of \eqref{eq-pieces-decomp-hill3}, for which in addition the squares of eigenvalues are sums of one term from each of the   lists  
\begin{equation*}
\bigl \{ \, \tfrac 12 w(H_j)(1 - e^{-2w(H_j)}) \, \bigr \} 
\,\,\sqcup\,\, 
\bigl \{ \, \pi^2 k^2 + w(H_j) ^2\, : \, k =1,2\dots \, \bigr \} ,
\end{equation*}
for $j=1,\dots q$.  This proves the theorem.
\end{proof}

The proof above, together with \eqref{eq-simple-inequality}, also yields the following estimate, which we shall use in the next section.

\begin{lemma} \label{lemma_op} 
For any vertex $Q$ and for any $\beta\in \Omega^*_0(X)_Q$,  
\[
\pushQED{\qed} 
\| D_{\mathrm{dR}} \beta \|^2   
	\geq \sum_{H\in \SAH(Q)}   
	\tfrac12 {w(H)} (1-e^{-2w(H)}) \| \beta\|^2 .
\qedhere
\] 
\end{lemma}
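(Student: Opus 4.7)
The plan is to extract this lower bound directly from the spectral decomposition of $D_{\mathrm{dR}}$ on $\Omega^*_{L^2}(X)_Q$ that was carried out in the proof of Theorem~\ref{thm-de-Rham-compact-resolvent-1}. That argument produced an orthonormal basis $\{v_i\}$ of $\Omega^*_{L^2}(X)_Q$ lying inside $\Omega^*_0(X)_Q$ and consisting of eigenvectors of $D_{\mathrm{dR}}$. Writing $\SAH(Q)=\{H_1,\dots,H_q\}$, the squares of the corresponding eigenvalues are precisely the sums $\lambda_1+\cdots+\lambda_q$, where for each $j$ the summand $\lambda_j$ is drawn from the list
\[
\bigl\{\, \tfrac 12 w(H_j)(1 - e^{-2w(H_j)}) \,\bigr\} \,\,\sqcup\,\, \bigl\{\, \pi^2 k^2 + w(H_j)^2 : k = 1, 2, \dots \,\bigr\}.
\]

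Next, I would apply the inequality \eqref{eq-simple-inequality} entry by entry. Since $\pi^2 k^2 + w(H_j)^2 \geq w(H_j)^2 \geq \tfrac12 w(H_j)(1-e^{-2w(H_j)})$, every admissible choice of $\lambda_j$ is at least the first entry of its list. Summing over $j=1,\dots,q$, the square of \emph{every} eigenvalue $\mu_i$ is bounded below by the constant
\[
C := \sum_{H\in \SAH(Q)} \tfrac12 w(H)(1 - e^{-2w(H)}).
\]

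Finally, for an arbitrary $\beta \in \Omega^*_0(X)_Q$, I would expand $\beta = \sum_i c_i v_i$ in the eigenbasis and invoke the spectral theorem applied to the self-adjoint closure of $D_{\mathrm{dR}}$, for which $\Omega^*_0(X)_Q$ is a core by Theorem~\ref{thm-de-Rham-compact-resolvent-1}; this gives $\|\beta\|^2 = \sum_i |c_i|^2$ and $\|D_{\mathrm{dR}}\beta\|^2 = \sum_i |c_i|^2 \mu_i^2 \geq C\,\|\beta\|^2$, which is exactly the claimed estimate. The argument is essentially bookkeeping; no genuine obstacle arises, the only minor point being the transition from the trivial inequality on eigenvectors to a general $\beta$ in the domain, which is handled automatically by essential self-adjointness.
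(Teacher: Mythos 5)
Your argument is correct and is exactly the proof the paper intends: the paper derives the lemma by combining the eigenbasis constructed in the proof of Theorem~\ref{thm-de-Rham-compact-resolvent-1} (whose squared eigenvalues are sums of one entry from each list) with the inequality \eqref{eq-simple-inequality}, and then concluding by expansion in that basis, just as you do. The only remark is that the final step needs nothing beyond symmetry of $D_{\mathrm{dR}}$ on its domain plus Parseval, so invoking essential self-adjointness is harmless but not required.
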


\begin{lemma} [Compare {\cite[Lemma~9.6]{BGH19}}.]
\label{lem-G-bounded-property-for-de-Rham}
If the weight function $w$ is $G$-adapted, then  for any $g$ in $G$, the difference $g(D_{\mathrm{dR}}) - D_{\mathrm{dR}}$ is a bounded operator.  Moreover $ \| g(D _{\mathrm{dR}}) - D _{\mathrm{dR}} \| $ is bounded by a constant times
\begin{multline*}
 \sup \bigl \{ \, \bigl |gw(H)-w(H) \bigr |  :
\text{\rm $H$ is a hyperplane of $X$} \, \bigr \} 
\\
+ \max \bigl \{ \,  w(H) : \text{\rm $H$ separates $P$ and $gP$}\, \bigr \}  ,
\end{multline*}
where the constant is independent of $g$ and the choice of weight function $w$.
\end{lemma}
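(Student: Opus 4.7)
The starting observation is that $D_{\mathrm{dR}}$ is built canonically from two pieces of data, the base vertex $P$ and the weight function $w$. Since the de Rham differential $d$ and its formal adjoint $d^\diamond$ are preserved by the isometric automorphisms of each cube, conjugating the decomposition $D_{\mathrm{dR}} = d_w + e_w + d_w^\diamond + e_w^\diamond$ by $g$ affects only the $w$- and $P$-dependent perturbations. I would first establish the naturality identity
\[
g\, D_{\mathrm{dR}}(P,w)\, g^{-1} = D_{\mathrm{dR}}(gP, gw),
\qquad (gw)(H) := w(g^{-1}H).
\]
This requires tracking the action of $g$ on the hyperplane coordinate $x_H$: a direct computation shows that $x_H \circ g^{-1}$ equals the coordinate $x_{gH}$ built from the base vertex $P$ when $gH$ does not separate $P$ from $gP$, and equals its reflection $1 - x_{gH}$ when it does. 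In both cases it coincides with the coordinate for $gH$ constructed from the base vertex $gP$, so the sign flips in $g_\ast(\alpha_H) = \pm \alpha_{gH}$ and the corresponding changes in $\SAH(C)$ cancel exactly.

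With the identity in hand I would insert $D_{\mathrm{dR}}(P, gw)$ and bound the two resulting differences separately. For the \emph{weight-change} piece $D_{\mathrm{dR}}(P, gw) - D_{\mathrm{dR}}(P, w)$ the $d + d^\diamond$ parts cancel and the remainder is cube-diagonal. The multiplication-type perturbations in $d_w, d_w^\diamond$ produce differences $\sum_{H \in \Mid(C)} (gw(H) - w(H))\, dx_H \wedge (\cdot)$ of operator norm bounded by $\dim(X) \cdot \sup_H |gw(H) - w(H)|$. For the $e_w$-type perturbations I would use the elementary estimate $|s\, e^{sy} - t\, e^{ty}| \le |s - t|$ valid on $y \in [-1,0]$ and $s, t \ge 0$, itself a consequence of the bound $|(1+\sigma y) e^{\sigma y}| \le 1$ on $[-1,0]\times[0,\infty)$; this transfers weight differences directly into $L^2$-operator-norm estimates of the same form, and summing over the uniformly finite sets $\SAH(C)$ gives the desired bound.

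The \emph{base-change} piece $D_{\mathrm{dR}}(gP, gw) - D_{\mathrm{dR}}(P, gw)$ rests on the observation that the coordinate $x_H$ built from $gP$ equals either $x_H$ built from $P$ or its reflection $1 - x_H$, the latter precisely when $H$ separates $P$ from $gP$. Contributions from hyperplanes that do \emph{not} separate $P$ from $gP$ are therefore identical in the two operators and cancel, so only the finitely many separating hyperplanes remain. Each multiplication-type contribution is bounded in norm by $w(H)$, and each $e_w$-type contribution by the closed-form identity
\[
\bigl\| w(H) e^{w(H) y_H} \alpha_H \wedge \beta \bigr\|_{L^2(D_H)}^2
= \tfrac12 w(H)\bigl(1 - e^{-2w(H)}\bigr)\|\beta\|^2 \le w(H)^2 \|\beta\|^2 .
\]
Bounded geometry provides uniform upper bounds on $|\Mid(C)|$ and $|\SAH(C)|$, yielding a cubewise operator-norm bound by a constant times $\max\{gw(H) : H \text{ separates } P \text{ and } gP\}$; this differs from the statement's $\max\{w(H) : \ldots\}$ by at most $\sup_H |gw(H) - w(H)|$, which is absorbed into the first term. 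The main obstacle is the orientation and sign bookkeeping in establishing the equivariance identity and in identifying precisely which hyperplanes survive in the base-change difference; once this combinatorial setup is in place, all analytic estimates are elementary and local on each cube.
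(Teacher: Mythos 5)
Your proposal is correct and follows essentially the same route as the paper: write $g(D_{\mathrm{dR}})=D_{\mathrm{dR}}(gP,gw)$, interpolate with an intermediate operator to separate a weight-change piece from a base-change piece, bound the $d_w$-type terms by $\dim(X)\cdot\sup_H|gw(H)-w(H)|$ and by the flip $\alpha_{H,gP}=-\alpha_{H,P}$ on separating hyperplanes, and bound the $e_w$-type terms by elementary calculus estimates together with $\tfrac12 w(H)(1-e^{-2w(H)})\le w(H)^2$. The only (harmless) deviations are that you use the intermediate operator $D_{\mathrm{dR}}(P,gw)$ rather than the paper's $D_{\mathrm{dR}}(w,gP)$, correctly absorbing the resulting $gw$-versus-$w$ discrepancy into the first term, and you make explicit the equivariance identity and a slightly sharper Lipschitz bound $|se^{sy}-te^{ty}|\le|s-t|$ that the paper replaces by its $2M$ estimate.
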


\begin{proof} 
For the purposes of this proof it  will be convenient to make explicit note of the dependence of $D_{\mathrm{dR}}$ on the choice of basepoint, and on the choice of weight function.  So we shall write 
\[
D_{\mathrm{dR}} = D_{w, P} ,
\]
so that 
\[
g(D_{\mathrm{dR}}) = D_{gw,gP},
\]
where 
$gw(H) = w(g^{-1}H)$.  We shall bound   the two differences 
\[
D_{gw, gP} - D_{w,gP} \quad \text{and} \quad 
D_{w,gP} - D_{w,P}
\]
separately to obtain the estimate in the statement of the lemma.

For the first, let us write 
\begin{multline*}
D_{gw, gP} - D_{w,gP} 
= (d_{gw,gP} - d_{w,gP}) +  (e_{gw,gP} - e_{w,gP})
\\
+( d_{gw,gP}^\diamond - d_{w,gP}^\diamond   ) + (e^\diamond_{w,P} - e^\diamond_{gw,gP})
\end{multline*}
Since $w$ is $G$-adapted, there is some $M\ge 0$ for which 
\[
\bigl |gw(H)-w(H) \bigr | \leq M \qquad \forall H.
\]
It follows from the formula for $d_w$ that 
\[
\|d_{gw,gP} - d_{w,gP} \| \le \dim (X)\cdot M ,
\]
and of course from this we also get 
\[
\| d_{gw,gP}^\diamond - d_{w,gP}^\diamond\|
\le 
\dim (X)\cdot M .
\]
Next,   if $x,w\ge 0$, and if $M\ge m\ge 0$, then
\begin{equation}
\label{eq-calculus-estimate}
\bigl | we^{-wx} - (w+m)e^{-(w+m)x}\bigr | \le 2M ,
\end{equation}
since for instance one can write  
\[
\begin{aligned}
\bigl | we^{-wx} - (w+m)e^{-(w+m)x}\bigr  |
	& \le m \Bigl  | w e^{-wx} \frac{(1 - e^{-mx})}{m} \Bigr | + m \\
	&\le m \left  | e^{-wx} wx \right | + m ,
\end{aligned}
\]
from which  \eqref{eq-calculus-estimate} follows easily.   From \eqref{eq-calculus-estimate}  and the formula for $e_{w,P}$ in Definition~\ref{def-e-differential} we obtain  
\[
\| e_{gw, gP} - e_{w,gP} \| \le 2 \dim (X) M ,
\]
and hence also
\[
\| e_{gw, gP}^ \diamond - e_{w,gP} ^ \diamond\| \le 2 \dim (X) M .
\]
Putting everything together, we obtain 
\[
\| D_{gw, gP} - D_{w,gP} \| \le 6 \dim (X) M .
\]

Next, let us turn to the difference $D_{w,gP} - D_{w,P}$.   We shall analyze it part by part, more or less as we did above.  To begin, 
\[
d_{w,gP} \beta - d_{w,P} \beta = 
\sum_{H \in \Mid (C)} 
	w(H) \bigl (\alpha_{H,gP} - \alpha_{H,P} \bigr )
	\wedge \beta 
\]
The difference $\alpha_{H,gP} - \alpha_{H,P}$ is zero unless the hyperplane $H$ separates $P$ and $gP$, in which case the difference is $2 \alpha_{H,gP}$.  So $\| d_{w,gP}  - d_{w,P} \|$ is bounded by a constant times the largest value of $w(H)$ among the hyperplanes separating $P$ and $gP$, as required, and in addition the same is true of $\| d_{w,gP} ^\diamond - d_{w,P} ^\diamond\|$.

The remaining terms in $D_{w,gP} - D_{w,P}$ are treated in a similar way.  Since
\begin{multline*}
e_{w,gP}\beta  - e_{w,P} \beta 
	\\
= \sum w(H) \left (  e ^{-w(H) y_{H,P}} \alpha _{H,P} {\wedge} \beta\, - \,e^{-w(H) y_{H,gP}} \alpha_{H,gP}{\wedge} \beta \right ) ,
\end{multline*}
and since again the summands are zero except when $H$ separates $P$ and $gP$, we can again bound the sum by a constant times the largest value of $w(H)$ among the hyperplanes separating $P$ and $gP$, and of course we can do the same for $e_{w,gP}^\diamond - e_{w,P}^\diamond$, too.  
\end{proof}

Now let us equip the Hilbert space  $\Omega^{\ast}_{L^2}(X)$ with the
$\Z/2$-grading it acquires from the differential form degree.  The
operator $D_{\mathrm{dR}}$ is odd with respect to this grading.
Combined, Theorem~\ref{thm-de-Rham-compact-resolvent-1} and
Lemma~\ref{lem-G-bounded-property-for-de-Rham} give the following
result.  

\begin{theorem}
\label{thm-de-Rham-cycle}
The operator $D_{\mathrm{dR}}$ on $\Omega^*_{L^2}(X)$, constructed
from the weight function \eqref{eq-distance-weight-function}, defines
an unbounded cycle for the group $KK_G(\C,\C)$.  \qed
\end{theorem}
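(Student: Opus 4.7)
The plan is to verify each condition of Definition \ref{def-unbounded-cycle} by packaging the structural results established earlier in this section, with only a small additional continuity and local boundedness argument to supply.

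First I would verify the basic data. The Hilbert space $\Omega^*_{L^2}(X)$ is separable because bounded geometry and finite-dimensionality of $X$ imply that $X$ has at most countably many cubes, and each $\Omega^*_{L^2}(C)$ is separable. The $\Z/2$-grading by form degree is preserved by the $G$-action, since $G$ permutes cubes of each fixed dimension and acts by cubical isometries, hence by grading-preserving unitaries on $\Omega^*_{L^2}(X)$. The operator $D_{\mathrm{dR}}$ is odd because $d_w$ and $e_w$ each raise form degree by one while their formal adjoints lower it by one.

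Next I would record that the distance weight function \eqref{eq-distance-weight-function} is both proper (by bounded geometry, as already noted after Definition~\ref{def-G-adapted-weight-function}) and $G$-adapted: the inequality $|w(H)-w(gH)|\le\operatorname{dist}(P,gP)$ supplies uniformity in $H$, while the open-subgroup condition follows from the openness of the stabilizer of $P$ in $G$, which is standard for second countable, locally compact groups acting by automorphisms on a bounded geometry CAT(0)-cubical complex. With this in hand, Theorem~\ref{thm-de-Rham-compact-resolvent-1} directly supplies essential self-adjointness and compact resolvent, giving condition (i) of Definition~\ref{def-unbounded-cycle}. For (ii), the $G$-invariance of the domain $\Omega^*_0(X)$ is immediate from the cubical nature of the action, and the boundedness of $g(D_{\mathrm{dR}})-D_{\mathrm{dR}}$ is Lemma~\ref{lem-G-bounded-property-for-de-Rham}.

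The only remaining task is strong continuity and local boundedness of the function $g\mapsto g(D_{\mathrm{dR}})-D_{\mathrm{dR}}$. Local boundedness follows directly from the explicit estimate in Lemma~\ref{lem-G-bounded-property-for-de-Rham}, since the displacement $\operatorname{dist}(P,gP)$, and hence both $\sup_H|gw(H)-w(H)|$ and the maximum weight of a hyperplane separating $P$ from $gP$, are locally bounded in $g$ by continuity of the action. For strong continuity I would mirror Remark~\ref{rem-continuity-not-an-issue}: on the open subgroup $K$ of $G$ that fixes $P$ and preserves $w$, one has simultaneously $gw=w$ and $gP=P$, so that $g(D_{\mathrm{dR}})=D_{w,P}=D_{\mathrm{dR}}$ on $K$; the function is therefore locally constant near the identity, and strongly continuous everywhere by translation.

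The step requiring the most care is this last one, because one must interlock the open-subgroup hypothesis built into $G$-adaptedness with the dependence of $D_{\mathrm{dR}}$ on the base vertex $P$. Once one observes that both $gw=w$ and $gP=P$ hold simultaneously on a sufficiently small open subgroup, the argument reduces to the observation in Remark~\ref{rem-continuity-not-an-issue} and the theorem follows.
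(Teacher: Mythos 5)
Your proposal is correct and follows the paper's own route: the paper derives this theorem precisely by combining Theorem~\ref{thm-de-Rham-compact-resolvent-1} (essential self-adjointness and compact resolvent, using properness of the distance weight) with Lemma~\ref{lem-G-bounded-property-for-de-Rham} ($G$-adaptedness giving bounded differences), with continuity and local boundedness handled exactly as you do, via local constancy on an open subgroup fixing $P$ as in Remark~\ref{rem-continuity-not-an-issue}. Your filling-in of the $G$-adaptedness check and the separability/grading remarks is consistent with what the paper leaves implicit.
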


\begin{definition} 
The \emph{de Rham cycle} for the Kasparov group $KK_G(\C,\C)$ is $(\Omega^{\ast}_{L^2}(X), D_{\mathrm{dR}})$, as in Theorem~\ref{thm-de-Rham-cycle}.  We shall denote by  $[D_{\mathrm{dR}}]\in KK_G(\C,\C)$ the associated $KK$-class.
\end{definition}

\section{Proofs of the Main Results}

In this section we shall prove the following two theorems:

\begin{theorem} 
\label{thm-de-Rham-is-JV}
If $G$ acts on $X$ by automorphisms, the de Rham and Julg-Valette
classes are equal to one another in $KK_G(\C,\C)$. 
\end{theorem}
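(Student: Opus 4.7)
My plan is to decompose the de Rham cycle along the orthogonal splitting of $\Omega^*_{L^2}(X)$ into the cube-by-cube Witten ground states of $A := d_w + d_w^\diamond$ and their complement. The idea is that the ``inter-cube'' part $B := e_w + e_w^\diamond$ preserves this splitting, so that $D_{\mathrm{dR}} = A + B$ breaks into a ground-state piece (which I will identify with a Julg-Valette cycle for a modified weight) and a complementary piece (which I will show is null in $KK_G$).

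First I would verify that $A$ on each cube $C$ has a one-dimensional kernel, spanned by the Witten ground state $e^{w_C}\eta_C$ (with $\eta_C \in \A(C)$ a unit top-degree constant coefficient form), and that $A^2 \geq \pi^2$ on the orthogonal complement, both of which follow from the tensor-product analysis in the proof of Theorem~\ref{thm-de-Rham-compact-resolvent-1}. Using the identity $w_{D_H} = w_C + w(H) x_H$ on $D_H$ together with the definition of $e_w$, a short calculation yields
\[
e_w(e^{w_C}\eta_C)\big|_{D_H} = w(H)\, e^{-w(H)}\, e^{w_{D_H}}(\alpha_H \wedge \eta_C),
\]
which is a scalar multiple of the Witten ground state $e^{w_{D_H}}\eta_{D_H}$; an analogous computation shows $e_w^\diamond$ also maps ground states to ground states. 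Consequently $B$ preserves both the ground-state subspace $\ker A$ and its orthogonal complement, and the unbounded operator $D_{\mathrm{dR}}$ is block-diagonal with respect to $\Omega^*_{L^2}(X) = \ker A \oplus \ker A^\perp$.

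Second I would identify the ground-state block.  The isometry $\Phi\colon \A^*_{L^2}(X) \to \ker A$ sending $\eta_C$ to $e^{w_C}\eta_C/\|e^{w_C}\|_{L^2(C)}$, with normalization $\|e^{w_C}\|_{L^2(C)}^2 = \prod_{H\in\Mid(C)}(e^{2w(H)}-1)/(2w(H))$, conjugates $B|_{\ker A}$ to the Julg-Valette operator on $\A^*_{L^2}(X)$ with modified weight $\tilde w(H) = \sqrt{\tfrac12 w(H)(1-e^{-2w(H)})}$. Since $\tilde w$ is again proper and $G$-adapted whenever $w$ is, and the Julg-Valette $KK_G$-class is independent of the weight, this block represents $[D_{\mathrm{JV}}]$. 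For the complementary block $(A+B)|_{\ker A^\perp}$, the uniform lower bound $A^2 \geq \pi^2$ provides compact resolvent with a positive spectral gap, and a rescaling/operator-homotopy argument then deforms this cycle to a degenerate one.

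The main obstacle is that the splitting $\ker A \oplus \ker A^\perp$ is only approximately $G$-equivariant: since $w$ is $G$-adapted but not $G$-invariant, the projection $P$ onto $\ker A$ satisfies $g(P) - P$ bounded but not generally compact. The argument must therefore be arranged so that the $KK_G$-decomposition survives this defect, for instance by performing the operator homotopy at the level of the full cycle on $\Omega^*_{L^2}(X)$ and exploiting that $w$ is exactly $G$-invariant on an open subgroup.
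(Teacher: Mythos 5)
Your preliminary computations are essentially correct, and they parallel what the paper proves fiberwise: the kernel of $A=d_w+d_w^\diamond$ on each cube is spanned by $e^{w_C}\eta_C$, the operators $e_w$ and $e_w^\diamond$ preserve the span of these ground states (this is Lemma~\ref{lem-invariance-of-A-s2} at $s=1$), the normalized matrix coefficient of $e_w$ between ground states is exactly $\sqrt{\tfrac12 w(H)(1-e^{-2w(H)})}$, so the compression of $D_{\mathrm{dR}}$ to $\ker A$ is a Julg--Valette operator for the modified weight $\tilde w$, and on the complement one has $(A+B)^2=A^2+B^2\ge \pi^2$. The genuine gap is the step where you convert this orthogonal splitting into a splitting of $KK_G$-classes. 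The subspace $\ker A$ is not $G$-invariant, and not even invariant up to compacts: the Witten potential $w_C=\sum_{H\in\Mid(C)}w(H)x_H$ depends on the basepoint $P$ through the coordinates $x_H$, and for $g$ with $gP\neq P$ the coordinate $x_H$ is replaced by $1-x_H$ on every cube crossing one of the finitely many hyperplanes separating $P$ from $gP$; each such hyperplane crosses infinitely many cubes, and on each of them the two candidate ground states $e^{w(H)x_H}dx_H$ and $e^{w(H)(1-x_H)}dx_H$ make a fixed angle bounded away from zero. Hence $g(P_{\ker A})-P_{\ker A}$ is bounded but \emph{not} compact (a similar failure occurs from bounded weight changes along a fixed hyperplane of small weight, which also meets infinitely many cubes), exactly the obstacle you flag at the end. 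Consequently the two blocks do not carry $G$-actions, the class $[D_{\mathrm{dR}}]$ does not decompose as a sum of a ``Julg--Valette block'' and a ``gapped block'' in $KK_G(\C,\C)$, and neither of your concluding claims is meaningful equivariantly. Your suggested remedies are not worked out, and invariance of $w$ on an open subgroup does not help, since the defect comes from the basepoint and from large group elements.

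What is missing is the mechanism that replaces the splitting, and this is exactly what the paper supplies: instead of decomposing, it builds one unbounded cycle over $C[0,1]$ on the module $\mathcal{H}(X)$ of \eqref{eq-def-curly-H}, with operator $s^{-1}D_{\mathrm{dR},s}$ for the rescaled weight $s\cdot w$ on $(0,1]$ and $D_{\mathrm{JV}}$ at $s=0$. The $s$-dependent ground-state spaces $A_s^*(X)$ (your $\ker A$, with $w$ replaced by $sw$) are used only fiberwise, to establish regularity and compact resolvent via Lemmas~\ref{lem-invariance-of-A-s1}, \ref{lem-invariance-of-A-s2}, \ref{lemma_op}, \ref{lem-tech-regularity-result} and \ref{lem-tech-compact-resolvent-result}; no equivariance of the splitting is ever needed. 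Almost-equivariance of the whole family follows instead from Lemma~\ref{lem-G-bounded-property-for-de-Rham}, whose estimate is uniform in the choice of weight, so that after multiplying by $s^{-1}$ the defect $s^{-1}\bigl(g(D_{\mathrm{dR},s})-D_{\mathrm{dR},s}\bigr)$ remains uniformly bounded as $s\to 0$. To salvage your route you would either have to prove such a homotopy statement on the full module yourself, or prove compactness of $g(P_{\ker A})-P_{\ker A}$, which, as explained above, fails in general.
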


\begin{theorem} 
\label{thm-de-Rham-has-gamma}
If $G$ acts properly and cocompactly on $X$ by automorphisms, the de
Rham cycle has Property $(\gamma)$. 
\end{theorem}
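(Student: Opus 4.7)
The plan is to apply Theorem~\ref{thm_gamma} to the unbounded cycle $(\Omega^{\ast}_{L^2}(X), D_{\mathrm{dR}})$ to secure conditions~(iia) and~(iib) of Property~$(\gamma)$, and then to derive condition~(i) directly from the already-established identity $[D_{\mathrm{dR}}]=1_G$ in $R(G)$. Since $G$ acts properly and cocompactly on the bounded geometry $\CAT(0)$-cubical space $X$, we may take $E=X$ as our model of $\underline{E}G$.

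First I would set up the required representation. Represent $C_0(X)$ on $\Omega^{\ast}_{L^2}(X)$ by cubewise pointwise multiplication: for $\phi \in C_0(X)$ and $\beta\in\Omega^*(C)$ set $(\phi\beta)|_C = (\phi|_C)\,\beta$. This is a non-degenerate, $G$-equivariant $*$-representation. Take $B\subseteq C_0(X)$ to be the $G$-invariant dense subalgebra of compactly supported functions that are $C^\infty$ on each cube. Since the action is proper and cocompact, $B$ contains cutoff functions and so meets the opening hypothesis of Theorem~\ref{thm_gamma}.

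The principal technical step is the verification of hypothesis~(i) of Theorem~\ref{thm_gamma}. Each $b\in B$ preserves $\Omega^*_0(X)=\operatorname{dom}(D_{\mathrm{dR}})$ by smoothness, and a direct computation gives $[d_w,b]=(db)\wedge$ and $[d_w^\diamond,b]=-\iota_{\nabla b}$ (the multiplicative Witten potentials commute with $b$), both bounded by $\|\nabla b\|_\infty$. The delicate terms are the transfer operators $e_w$ and $e_w^\diamond$: for $\beta\in\Omega^*(C)$,
\[
[b,e_w]\beta = \sum_{H\in\SAH(C)} w(H)\,e^{w(H)y_H}\,\bigl(b-b\circ\pi_H\bigr)\,\alpha_H\wedge\pi_H^{*}\beta,
\]
where $\pi_H\colon D_H\to C$ is the orthogonal projection. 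Since $b-b\circ\pi_H$ vanishes on $C$ (where $y_H=0$) with $y_H$-derivative bounded by $\|\nabla b\|_\infty$, one has the pointwise bound $|b(x)-b(\pi_H(x))|\le\|\nabla b\|_\infty|y_H(x)|$. The crucial analytic fact is that the one-dimensional integral $\int_{-1}^{0} y^2\, w^2 e^{2wy}\,dy$ is bounded by a constant uniform in $w>0$ (it tends to $0$ both as $w\to 0$ and as $w\to\infty$); integrating cube-by-cube and using bounded geometry to control multiplicities yields $\|[e_w,b]\|\le C\|\nabla b\|_\infty$, and likewise for $[e_w^\diamond,b]$. Uniform boundedness in $g\in G$ is then automatic, since $G$ acts by isometric automorphisms so $\|\nabla g(b)\|_\infty = \|\nabla b\|_\infty$. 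For the support condition~(ii) of Theorem~\ref{thm_gamma}, take $K$ to be the union of all cubes meeting $\operatorname{supp}(b)$ together with their codimension-one neighbors; this is compact by bounded geometry, and each summand of $[D_{\mathrm{dR}},b]$ is supported in $K$ because $d_w,d_w^\diamond$ are cube-local and $e_w,e_w^\diamond$ connect only adjacent cubes. By $G$-equivariance, $\operatorname{Supp}([D_{\mathrm{dR}},g(b)])\subseteq g\cdot K$ for all $g\in G$.

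Finally, condition~(i) of Property~$(\gamma)$ (that the cycle restricts to $1_K$ in $R(K)$ for every compact subgroup $K\le G$) follows at once from the combination of Theorem~\ref{thm-de-Rham-is-JV} and Theorem~\ref{thm-JV=1}: together they give $[D_{\mathrm{dR}}]=1_G$ in $R(G)$, and the natural restriction map $R(G)\to R(K)$ sends $1_G$ to $1_K$. The main obstacle in this plan is the commutator estimate for $[e_w,b]$ and $[e_w^\diamond,b]$: one must show that the \emph{a~priori} divergent weight factor $w(H)$ in $e_w$ is defeated by the vanishing of $b-b\circ\pi_H$ on $C$ together with the rapid $L^2$-decay of $e^{w(H)y_H}$ away from $y_H=0$.
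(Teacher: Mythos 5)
Your proposal is correct and follows essentially the same route as the paper: apply Theorem~\ref{thm_gamma} with $E=X$, the pointwise multiplication representation of $C_0(X)$, and $B$ the compactly supported cube-wise smooth functions, with the cube-local commutators giving Clifford multiplication by the gradient and the $[e_w,b]$ term controlled by the vanishing of $b-b\circ\pi_H$ on $C$ against the decay $w(H)e^{w(H)y_H}$ (the uniform bound on $x\,w\,e^{-wx}$), while condition~(i) of Property~$(\gamma)$ follows from $[D_{\mathrm{dR}}]=[D_{\mathrm{JV}}]=1_G$ restricted to compact subgroups. This matches the paper's argument in both structure and the key estimate.
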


We shall prove Theorem~\ref{thm-de-Rham-is-JV} by constructing an
explicit homotopy between the Julg-Valette and de Rham cycles, or in
other words we shall construct an unbounded cycle for
$KK_G(\C,C[0,1])$ that restricts to the Julg-Valette and de Rham
cycles at $t{=}0$ and $t{=}1$, respectively.

The underlying Hilbert $C[0,1]$-module for the homotopy will be  
%
\begin{equation}
\label{eq-def-curly-H}
\mathcal{H}(X) = \left\{\,
\beta \colon [0,1] \to \Omega^{\ast}_{L^2}(X) \, \middle\vert  \,
        \begin{tabular}{l}$\beta$ is norm-continuous, \\
                  and $\beta_0 \in \A^{\ast}_{L^2}(X)$ \end{tabular} 
\, \right\} .
\end{equation}
Here $ \A^{\ast}_{L^2}(X)$ is the Hilbert space for the Julg-Valette
cycle from Definition~\ref{def-A-space-of-X}, which we note is a
Hilbert subspace of $\Omega^*_{L^2} (X)$.   

Let $w$ be a proper and $G$-adapted weight function. The operator $\D$ for our cycle is  defined by
\begin{equation}
\label{eq-def-curly-D}
\D   \beta  \colon s \longmapsto
\begin{cases}
s^{-1} D_{\mathrm{dR},s} \beta_s  & s\in (0,1] \\
  D_{\mathrm{JV}} \beta _0 & s=0 ,
  \end{cases}
\end{equation}
where:
\begin{enumerate}[\rm (i)]

\item  $D_{\mathrm{dR},s}$ is the de Rham operator from Definition~\ref{def-de-Rham-op}  associated to the weight function $H\mapsto s\cdot w (H)$.

\item  $D_{\mathrm{JV}}$ is the Julg-Valette operator from Definition~\ref{def-julg-valette-op} associated to the weight function $w$ itself.
\end{enumerate}
Of course we need to specify the domain of $\D$. But before we do that, let us carry out some preliminary computations that should help  make Theorem~\ref{thm-de-Rham-is-JV} plausible, and help justify our choice for the operator $\D$.

\begin{definition}
Let $C$ be a $q$-cube in $X$, and let  $H_1,\dots, H_q$ be the hyperplanes that cut $C$ (that is, the hyperplanes in $\Mid(C)$). 
We shall denote by  
\[
 A_s (C)\subseteq \Omega_0^q (C)
 \]
  the one dimensional subspace that is spanned by the form 
\[
 e^{s w_C}dx_{H_1} \cdots dx_{H_q}  .
\]
\end{definition}

\begin{lemma}
\label{lem-invariance-of-A-s1}
The space  $A_s(C)$  is the kernel of the operator
\[
d_{sw} {+}d_{sw}^\diamond
\colon \Omega^*_0 (C) \longrightarrow \Omega^*_0(C) ,
\]
 and of its self-adjoint extension. 
 \end{lemma}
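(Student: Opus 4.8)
The plan is to prove the two inclusions $A_s(C)\subseteq\ker(d_{sw}{+}d_{sw}^\diamond)$ and $\ker(d_{sw}{+}d_{sw}^\diamond)\subseteq A_s(C)$ separately, and to get the statement about the self-adjoint extension for free from the second argument, which will in fact exhibit an orthonormal basis of eigenforms lying in $\Omega^*_0(C)$.

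For the first inclusion I would argue directly. Write $\omega=dx_{H_1}\wedge\cdots\wedge dx_{H_q}$, so that $A_s(C)$ is spanned by $\beta=e^{sw_C}\omega$. This is a top-degree form, so $d_{sw}\beta=0$: the term $d\beta$ and each term $sw(H_j)\,dx_{H_j}\wedge\beta$ in formula~\eqref{eq-perturbed-de-rham} vanish by degree. For the adjoint term, observe that on $\Omega^*(C)$ one has the identity $d_{sw}^\diamond=e^{sw_C}\circ d^\diamond\circ e^{-sw_C}$; this follows from $d_{sw}=e^{-sw_C}d\,e^{sw_C}$, the extended adjoint relation~\eqref{eq-adjoint-relation}, and the fact that multiplication by $e^{\pm sw_C}$ preserves $\Omega^*_0(C)$. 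Hence $d_{sw}^\diamond\beta=e^{sw_C}d^\diamond\omega=0$, since the Hodge star of a constant-coefficient top-degree form is a constant function, hence $d$-closed (alternatively this is immediate from Stokes's theorem). Since a top-degree form automatically pulls back to zero on every face of $C$, we have $\beta\in\Omega^q_0(C)$, so $\beta$ lies in the domain of $d_{sw}{+}d_{sw}^\diamond$ and a fortiori in the domain of its closure.

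For the reverse inclusion I would exploit the product structure of $C$, exactly as in the proof of Theorem~\ref{thm-de-Rham-compact-resolvent-1}. Using the coordinates $x_{H_1},\dots,x_{H_q}$ to identify $C$ with a product of $q$ unit intervals, and using that $w_C=\sum_j w(H_j)x_{H_j}$, the operator $d_{sw}{+}d_{sw}^\diamond$ becomes the sum of the ampliations of the corresponding one-variable operators $d_{sw}{+}d_{sw}^\diamond$ on the interval factors, one for each $H_j$ and with weight $s\,w(H_j)$; note that this operator is precisely $D_w$ of~\eqref{eq-def-witten-de-rham} for the weight function $s\cdot w$, hence essentially self-adjoint with compact resolvent, so that ``its self-adjoint extension'' is unambiguous. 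These ampliations commute and are non-negative, the algebraic graded tensor product of the interval domains is a common core, and each one-variable factor is essentially self-adjoint with one-dimensional kernel spanned by $e^{s w(H_j)x_{H_j}}\,dx_{H_j}$ --- this is exactly the one-cube computation carried out just before Theorem~\ref{thm-de-Rham-compact-resolvent-1}, applied with $w$ replaced by $s\cdot w$. Consequently the kernel of $d_{sw}{+}d_{sw}^\diamond$, equivalently the kernel of its closure, is the graded tensor product of these one-dimensional spaces, i.e. the span of $\bigwedge_j e^{s w(H_j)x_{H_j}}\,dx_{H_j}=e^{sw_C}\,dx_{H_1}\wedge\cdots\wedge dx_{H_q}$, which is $A_s(C)$.

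I expect the only real care needed to be in the domain bookkeeping rather than in any computation: one must check that multiplication by $e^{\pm sw_C}$ preserves $\Omega^*_0(C)$, so that the adjoint relation~\eqref{eq-adjoint-relation} extends to give the conjugation identity for $d_{sw}^\diamond$ with no boundary contribution, and that under the product identification the algebraic graded tensor product of the interval domains $\Omega^*_0(\cdot)$ is genuinely a core for the closure of $d_{sw}{+}d_{sw}^\diamond$, so that the kernel of the closure may be read off factor by factor. Neither point is delicate, and both are entirely in the spirit of the arguments already used for Lemma~\ref{lem-ess-self-adj-1} and Theorem~\ref{thm-de-Rham-compact-resolvent-1}.
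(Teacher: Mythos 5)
Your proof follows essentially the same route as the paper: the paper's own proof simply cites the one-interval computation following Definition~\ref{def-omega-and-omega-zero}, leaving implicit the reduction of a $q$-cube to a graded tensor product of intervals that you spell out (and that the paper carries out explicitly, one level up, in the proof of Theorem~\ref{thm-de-Rham-compact-resolvent-1}). Your direct verification that $A_s(C)$ lies in the kernel, via the conjugation identity $d_{sw}^\diamond=e^{sw_C}\circ d^\diamond\circ e^{-sw_C}$, is correct. One justification in the reverse inclusion is misstated, however: the ampliations $I\widehat\otimes\cdots\widehat\otimes D_j\widehat\otimes\cdots\widehat\otimes I$ are \emph{odd} operators in a graded tensor product, so they pairwise \emph{anticommute} rather than commute, and they are certainly not non-negative (each interval operator $d_{sw}+d_{sw}^\diamond$ has spectrum symmetric about $0$). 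The conclusion you want still holds, but for a different reason: the anticommutation kills the cross terms in the square, so $\bigl(\sum_j\tilde D_j\bigr)^2=\sum_j\tilde D_j^2$ is a sum of commuting non-negative operators, whose joint kernel is the graded tensor product of the individual kernels --- which is exactly the device the paper uses (``the squares of eigenvalues are sums of one term from each of the lists'') in the proof of Theorem~\ref{thm-de-Rham-compact-resolvent-1}. With that repair the argument is complete and matches the paper's intent.
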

 
 \begin{proof}
 This follows from the computations after Definition~\ref{def-omega-and-omega-zero}.
 \end{proof}

\begin{definition}
We shall denote by $A_s^q(X)\subseteq \Omega_0^q(X)$ the algebraic   direct sum 
 \[
 A_s^q(X) = \bigoplus_{\dim(C)=q} A_s(C),
 \]
   by $A^*_{s}(X)\subseteq \Omega^*_{0} (X)$ the direct sum of all $A^q _s (X)$, and by $A^*_{L^2,s}(X)\subseteq \Omega^*_{L^2} (X)$ the Hilbert space completion of $A^*_{s}(X)$.
\end{definition}

\begin{lemma}
\label{lem-invariance-of-A-s2}
For every $s\in (0,1]$ the operators 
\[
e_{sw}, e^{\diamond}_{sw}\colon \Omega^*(X)\longrightarrow \Omega^*(X)
\]
map $A_s^*(X) $ into itself.
\end{lemma}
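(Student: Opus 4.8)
The plan is to reduce the claim to the single-cube picture already established, namely the tensor-product decomposition \eqref{eq-pieces-decomp-hill2} used in the proof of Theorem~\ref{thm-de-Rham-compact-resolvent-1}. First I would recall that $e_{sw}$ and its formal adjoint $e^\diamond_{sw}$ are block-diagonal with respect to the decomposition \eqref{eq-normal-cube-decomp} into the pieces $\Omega^*_{L^2}(X)_Q$: this is visible from Definition~\ref{def-e-differential}, since if $C$ leads $Q$ to $P$ and $H\in\SAH(C)$, then the larger cube $D_H$ that contains $C$ as a codimension-one face also leads $Q$ to $P$ (the hyperplanes cutting $D_H$ are exactly those cutting $C$ together with $H$, all of which separate $Q$ from $P$). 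So it suffices to prove the claim on each block $\Omega^*_{L^2}(X)_Q$ separately, and there $A_s^*(X)$ restricts to the span of the forms $e^{sw_C}dx_{H_1}\cdots dx_{H_q}$ over the faces $C$ of the distinguished cube $D$ of Remark~\ref{rem-normal-cube-path}.

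Next I would pass through the unitary isomorphism \eqref{eq-pieces-decomp-hill2}, $\widehat\bigotimes_j\bigl(\Omega^*_{L^2}(Q)\oplus\Omega^*_{L^2}(E_j)\bigr)\cong\Omega^*_{L^2}(X)_Q$, where $H_1,\dots,H_q$ are the hyperplanes in $\SAH(Q)$ and $E_j$ is the edge cut by $H_j$. Under this isomorphism the line $A_s(C)$, for a face $C$ of $D$ cut by the sub-collection $\{H_j : j\in S\}$, corresponds to the elementary tensor whose $j$-th factor is $e^{sw(H_j)x_{H_j}}dx_{H_j}\in\Omega^1_0(E_j)$ for $j\in S$ and the constant $1\in\Omega^0_0(Q)$ for $j\notin S$; thus $A^*_{L^2,s}(X)$ restricted to the block is precisely $\widehat\bigotimes_j V_j$, where $V_j\subseteq\Omega^*_{L^2}(Q)\oplus\Omega^*_{L^2}(E_j)$ is the two-dimensional span of $1$ and $e^{sw(H_j)x_{H_j}}dx_{H_j}$. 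This $V_j$ is exactly the kernel of the operator $\begin{bmatrix}0&0\\0&d_{sw}{+}d_{sw}^\diamond\end{bmatrix}$ computed in the single-edge analysis preceding the proof of Theorem~\ref{thm-de-Rham-compact-resolvent-1}. The final step is then to identify $e_{sw}$ (and $e^\diamond_{sw}$) on the block with $\sum_j I\widehat\otimes\cdots\widehat\otimes e_j\widehat\otimes\cdots\widehat\otimes I$, where $e_j\colon 1\mapsto s\,w(H_j)e^{s w(H_j)y_{H_j}}dx_{H_j}$ is the off-diagonal operator from \eqref{eq-interval-operator} (with the weight $sw$), and to observe that $e_j$ maps $V_j$ into itself: indeed $e^{sw(H_j)y_{H_j}}=e^{-sw(H_j)}e^{sw(H_j)x_{H_j}}$, so $e_j(1)$ is a scalar multiple of the second basis vector of $V_j$, and $e_j$ kills that second vector since it already carries a $dx_{H_j}$. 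Hence each summand of $e_{sw}$ preserves $\widehat\bigotimes_j V_j$, and therefore so does $e_{sw}$; the same argument, or taking adjoints, handles $e^\diamond_{sw}$.

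The only real point requiring care—and the step I would flag as the main obstacle—is the bookkeeping that $e_{sw}$ genuinely acts factor-by-factor under \eqref{eq-pieces-decomp-hill2}, i.e.\ that pulling back $\beta$ from $C$ to $D_H$ along the orthogonal projection is compatible with the exterior-product identification, so that wedging with $\alpha_H$ corresponds to applying $e_j$ in the $j$-th tensor slot and the identity elsewhere. This is the same compatibility already invoked (for $d_w$ and its adjoint) in the proof of Theorem~\ref{thm-de-Rham-compact-resolvent-1}, so it can be cited rather than redone; once it is in hand the invariance of $A^*_s(X)$ is immediate from the two-dimensional computation on each factor. I would keep the write-up short by explicitly reducing to that earlier tensor-factor analysis.
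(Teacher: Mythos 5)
Your argument is correct, but it takes a genuinely different (more structural) route than the paper. The paper proves the lemma by a direct computation cube by cube: it applies $e_{sw}$ to the basis form $e^{sw_C}dx_{H_1}\cdots dx_{H_q}$ of $A_s(C)$, uses the identity $sw_{D_H}=sw_C+sw(H)y_H+sw(H)$ to recognize the image as a combination of the corresponding basis forms of the spaces $A_s(D_H)$ with explicit coefficients $sw(H)e^{-sw(H)}$, and carries out a parallel computation for $e^\diamond_{sw}$. You instead pass through the block decomposition \eqref{eq-normal-cube-decomp} and the tensor factorization \eqref{eq-pieces-decomp-hill2}, identify $A_s^*(X)\cap\Omega^*(X)_Q$ with $\widehat{\bigotimes}_j V_j$, and reduce everything to the two-dimensional computation on each edge factor. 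The two proofs rest on the same multiplicativity $e^{sw_{D_H}}=e^{sw(H)x_H}\cdot e^{sw_C}$, which is exactly the compatibility you flag as the main point, and your appeal to the slot-by-slot identification of $D_{\mathrm{dR},s}$ on $\Omega^*(X)_Q$ already asserted in the proof of Theorem~\ref{thm-de-Rham-compact-resolvent-1} is legitimate. If you write this up, make explicit the one combinatorial fact the reduction depends on: for a cube $C$ leading $Q$ to $P$, one has $\SAH(C)=\SAH(Q)\setminus\Mid(C)$, so that $e_{sw}$ really does act only in the tensor slots not already occupied. What the paper's computation buys in exchange for being longer is the explicit coefficients, which make transparent the subsequent informal claim that $s^{-1}e_{sw}$ converges to the Julg--Valette differential as $s\to 0$; your version buys brevity and shows the invariance is forced by the product structure of each block.
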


 \begin{proof}
 Let $s \in (0,1]$. The operator $e_{sw}$ has the form
 \begin{equation}
 \label{eq-action-of-esw}
 e_{sw}\colon \beta \longmapsto \sum_{H\in \SAH(C)} s w(H) \, e^{s w(H)y_H}\, \alpha_H \wedge \beta 
 \end{equation}
 for $\beta \in \Omega^*(C)$; see Definition~\ref{def-e-differential}.  Recall that in this formula $\alpha_H \wedge \beta $  is to be viewed as a form on the cube $D_H$ that is cut by $H$ and contains $C$ as a codimension-one face; the form $\beta$ is pulled back to $D_H$ along the face projection $D_H {\to} C$.  If $C$ is cut by the hyperplanes $H_1,\dots, H_q$, then of course the  cube  $D_H$ is cut by the hyperplanes $H, H_1,\dots ,H_q$.  In addition, if $w_C$ is  viewed as a function on $D_H$ by pulling back along the face projection $D_H{\to} C$, then  
\[
w_{D_H} = w_C + w(H) x_H.
\]
  It follows that 
\[
sw_{D_H} = sw_C + sw(H) x_H = sw_C + s w(H) y_H + s w(H) ,
\]
and hence we find that 
 \begin{multline*}
 e_{sw}\colon 
e^{s w_C}dx_{H_1} \cdots dx_{H_q}  
\\
 \longmapsto \sum_{H\in \SAH(C)} (s w(H) e^{-sw(H)} ) e^{s w_{D_H}} dx_Hdx_{H_1}\cdots d x _{H_q} .  
 \end{multline*}
This proves the lemma for $e_{sw}$.  A similar computation shows that if $D$ is a $(q{+}1)$-cube cut by hyperplanes $H_0,\dots, H_q$, and if $C_j$ denotes the $q$-dimensional face of $D$ that is separated from the basepoint by $H_j$, then 
\begin{multline*}
e^\diamond_{sw} \colon e^{w_D}d_{H_0}\cdots dx_{H_q}
\\
\longmapsto 
\sum_{j=0}^q (-1)^j  \| e^{s w(H_j)x}\|_{L^2 (0,1)}^2 \cdot s w(H_j) e^{-s w(H_j)}
\\
\cdot e^{w_{C_j}}dx_{H_0}\cdots \widehat{dx_{H_j}}\cdots dx_{H_q}. 
\end{multline*}
This proves the lemma for $e_{sw}^{\diamond}$.
\end{proof}

Now $A_s^*(X)$ has an orthogonal complement in the inner product space
$\Omega^*_0(X)$, since each one-dimensional space 
$A_s (C)$ is certainly complemented in $\Omega_0^*(C)$, and it follows from Lemmas~\ref{lem-invariance-of-A-s1} and \ref{lem-invariance-of-A-s2} that the operator $s^{-1} D_{\mathrm{dR},s} $ preserves the direct sum decomposition of $ \Omega^*_0(X)$ into $A_s^*(X)$ and its orthogonal complement in $ \Omega^*_0(X)$.  On the former, we have,  
 in view of Lemma~\ref{lem-invariance-of-A-s1},
\[
 s^{-1} D_{\mathrm{dR},s}  \, = \, s^{-1} e_{sw} {+} s^{-1} e_{sw}^\diamond \, 
 \colon 
 A_s^* (X) \longrightarrow A_s^* (X) .
 \]
 Moreover it follows from \eqref{eq-action-of-esw} that 
 \begin{equation}
 \label{eq-eq-action-of-esw-again}
 s^{-1} e_{sw}\colon \beta \longmapsto \sum_{H\in \SAH(C)}   w(H) \, e^{s w(H)y_H}\, \alpha_H \wedge \beta 
 \end{equation}
 for $\beta \in A_s (C)$.  But when  $s{=}0$ the space $A_s^*(X)$   is the space $A^*(X)$ from Definition~\ref{def-A-space-of-X} that we used in our construction of the Julg-Valette cycle, and it is at least informally clear from \eqref{eq-eq-action-of-esw-again}  that as $s{\to}0$ the operator $ s^{-1} e_{sw}$ ``converges'' to the Julg-Valette differential. It follows  that,  in the same informal sense,  $s^{-1}D_{\mathrm{dR},s}$ converges to the Julg-Valette operator $D_{\mathrm{JV}}$. 
 
  So roughly speaking, to prove Theorem~\ref{thm-de-Rham-is-JV}  we need to:
\begin{enumerate}[\rm (i)]

\item Make the above  idea of convergence precise.

\item Show that the summand of operator $s^{-1} D_{\mathrm{dR},s}$ that acts  on the orthogonal complement $A_s^*(X)^\perp \subseteq \Omega^*_0(X)$ contributes nothing in $KK$-theory.
  
  \end{enumerate}
This is what we shall do rigorously below. An additional complication
is that the direct sum decomposition above is not $G$-equivariant, but
this too will be properly  taken into account in our proof of the
theorem. 
  
Let us now return to the issue of the domain for the operator $\D$ in
\eqref{eq-def-curly-D}.    First we define 
\[
\mathcal{H}_0 (C)=
\left\{\,
  \beta \colon [0,1] \to \Omega^{\ast}_0(C) \, \middle\vert \,
        \begin{tabular}{l}$\beta$ is continuous for the
            $C^\infty$-topology \\on $\Omega^*_0(C)$, and $\beta_s \in
            \A_s(C)$ for all \\sufficiently small $s$\end{tabular} 
\, \right\} ,
\]
and then we form the  algebraic direct sum 
\[
\mathcal{H}_{0} (X)= \bigoplus _{C} \mathcal{H}_0(C),
\]
which is a dense $C[0,1]$-submodule of $\mathcal{H}(X)$.  This will be
our domain. The additional smoothness and continuity hypotheses ensure
that $\D$ maps $\mathcal{H}_0 (X)$ into $\mathcal{H}(X)$.

\begin{theorem}
\label{thm-curly-d-regular}
The operator $\D$, with the above domain $\mathcal{H}_{0} (X)$,  is a regular and essentially self-adjoint operator on the Hilbert module $\mathcal{H}(X)$. 
\end{theorem}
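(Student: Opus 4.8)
The plan is to verify the standard criterion for operators on Hilbert $C^*$-modules: since $\D$ is symmetric with the dense $C[0,1]$-submodule $\mathcal{H}_0(X)$ as a core, it is regular and essentially self-adjoint precisely when $\operatorname{Ran}(\D+i)$ and $\operatorname{Ran}(\D-i)$ are dense in $\mathcal{H}(X)$, equivalently when the closures of $\D\pm i$ admit bounded, everywhere-defined (adjointable) inverses. I would establish this by decomposing $\mathcal{H}(X)$ into pieces on which $\D$ is, respectively, bounded or boundedly invertible. The first reduction is to pass to a single block of the vertex-indexed decomposition: the operator $D_{\mathrm{dR}}$ is block-diagonal for \eqref{eq-normal-cube-decomp}, and this holds for an arbitrary weight function, in particular for $H\mapsto s\cdot w(H)$, while the analogous statement for $D_{\mathrm{JV}}$ and the subspaces $\A^{\ast}_{L^2}(X)_Q$ follows at once from the formulas for $d_{\mathrm{JV}}$ and $\delta_{\mathrm{JV}}$. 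A compactness argument over $[0,1]$ then identifies $\mathcal{H}(X)$ with the Hilbert-module direct sum $\widehat{\bigoplus}_{Q}\mathcal{H}(X)_Q$, and $\D$ with the direct sum of its restrictions $\D|_{\mathcal{H}(X)_Q}$; since an orthogonal direct sum of regular, essentially self-adjoint operators is again such, it suffices to treat one block.

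Fix a vertex $Q$. The second reduction splits a ``kernel bundle'' off $\mathcal{H}(X)_Q$. For $s\in(0,1]$ let $A^{\ast}_s(X)_Q\subseteq\Omega^{\ast}_0(X)_Q$ be the space introduced in Section~5, and set $A^{\ast}_0(X)_Q:=\A^{\ast}(X)_Q$. By Lemmas~\ref{lem-invariance-of-A-s1} and \ref{lem-invariance-of-A-s2}, $D_{\mathrm{dR},s}$ preserves $A^{\ast}_s(X)_Q$ and, being symmetric, also preserves its orthogonal complement inside $\Omega^{\ast}_0(X)_Q$; moreover the orthogonal projection $P^Q_s$ of $\Omega^{\ast}_{L^2}(X)_Q$ onto the closure of $A^{\ast}_s(X)_Q$ is norm-continuous in $s\in[0,1]$, since the spanning forms $e^{s w_C}\,dx_{H_1}\cdots dx_{H_q}$ vary continuously in $L^2$ and stay linearly independent, and $P^Q_0$ is the projection onto $\A^{\ast}_{L^2}(X)_Q$, the fibre of $\mathcal{H}(X)_Q$ at $s=0$. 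Hence $\mathcal{H}(X)_Q$ splits orthogonally as $\mathcal{M}_Q\oplus\mathcal{N}_Q$, where the sections in $\mathcal{M}_Q$ take values in the spaces $\overline{A^{\ast}_s(X)_Q}$ and those in $\mathcal{N}_Q$ are orthogonal to them (and so vanish at $s=0$); the operator $\D$ respects this splitting, the restricted operators having cores $\mathcal{H}_0(X)\cap\mathcal{M}_Q$ and $\mathcal{H}_0(X)\cap\mathcal{N}_Q$.

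On $\mathcal{M}_Q$ the operator $\D$ is bounded: by the computations of Section~5 and Lemma~\ref{lem-invariance-of-A-s1}, $D_{\mathrm{dR},s}$ restricts on $A^{\ast}_s(X)_Q$ to $e_{sw}+e_{sw}^{\diamond}$, and \eqref{eq-eq-action-of-esw-again} shows that $s^{-1}e_{sw}$ acts there by $\beta\mapsto\sum_{H}w(H)\,e^{s w(H)y_H}\,\alpha_H\wedge\beta$; since $y_H$ takes values in $[-1,0]$ on the edges involved, this is bounded uniformly in $s\in(0,1]$, and transported to the fixed Hilbert space $\A^{\ast}_{L^2}(X)_Q$ by means of $P^Q_\bullet$ it extends norm-continuously to $s=0$ with value $d_{\mathrm{JV}}+\delta_{\mathrm{JV}}=D_{\mathrm{JV}}$. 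Thus $\D|_{\mathcal{M}_Q}$ is a bounded self-adjoint adjointable operator, hence regular and self-adjoint. On $\mathcal{N}_Q$, by contrast, the scaling by $s^{-1}$ works in our favour: the proof of Theorem~\ref{thm-de-Rham-compact-resolvent-1} produces an eigenbasis of $D_{\mathrm{dR},s}^2$ with eigenvalues $\sum_j\mu_j$, each $\mu_j$ drawn from the list $\{\tfrac12 s w(H_j)(1-e^{-2s w(H_j)})\}$ or from $\{\pi^2 k^2+s^2 w(H_j)^2:k\ge 1\}$, and the eigenvectors orthogonal to $A^{\ast}_s(X)_Q$ are exactly those for which some $\mu_j$ comes from the second list; hence $D_{\mathrm{dR},s}^2\ge\pi^2$ on every fibre of $\mathcal{N}_Q$, uniformly in $s$ and in $Q$. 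Consequently the fibre operator of $\D|_{\mathcal{N}_Q}$ at $s$, call it $\D(s)$, has square bounded below by $\pi^2 s^{-2}$, so $\D(s)\pm i$ is invertible with $\|(\D(s)\pm i)^{-1}\|\le s/\pi$; using the same eigenbasis one checks that $s\mapsto(\D(s)\pm i)^{-1}$ is norm-continuous on $(0,1]$, and as it tends to $0$ when $s\downarrow 0$ it extends, by $0$ at $s=0$, to a norm-continuous operator field on $\mathcal{N}_Q$ which is an adjointable inverse of the closure of $\D|_{\mathcal{N}_Q}\pm i$. Therefore $\D|_{\mathcal{N}_Q}$ is regular and essentially self-adjoint, and combining the two summands shows that $\D|_{\mathcal{H}(X)_Q}$, and hence $\D$, is regular and essentially self-adjoint.

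The main obstacle in this plan is concentrated entirely at the endpoint $s=0$, where $D_{\mathrm{dR},s}$ is rescaled by the divergent factor $s^{-1}$ while the fibre Hilbert space simultaneously collapses from $\Omega^{\ast}_{L^2}(X)$ down to $\A^{\ast}_{L^2}(X)$. Making the decomposition $\mathcal{H}(X)_Q=\mathcal{M}_Q\oplus\mathcal{N}_Q$ genuinely precise---in particular the norm-continuity of $P^Q_\bullet$ at $s=0$, the norm-continuity up to $s=0$ of the reduced bounded operator on $\mathcal{M}_Q$, and the decay to $0$ of the resolvent of $\D|_{\mathcal{N}_Q}$ as $s\downarrow 0$---is where the real work lies; it is exactly the uniform spectral gap $D_{\mathrm{dR},s}^2\ge\pi^2$ on $\mathcal{N}_Q$ furnished by the proof of Theorem~\ref{thm-de-Rham-compact-resolvent-1} that drives the $\mathcal{N}_Q$-part and, in effect, turns the informal assertion of Section~5 that ``$s^{-1}D_{\mathrm{dR},s}$ converges to $D_{\mathrm{JV}}$'' into the required regularity and essential self-adjointness.
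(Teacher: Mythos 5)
Your proposal is correct and follows essentially the same route as the paper: decompose by vertices $Q$, split off the (finite-rank, $s$-dependent) projection onto the spaces $A_s^*(X)_Q$, observe that $\D$ is a bounded norm-continuous family there, and handle the orthogonal complement, whose fibre vanishes at $s=0$, by constructing the resolvents. The only difference is presentational: the paper packages the complement into an abstract lemma about fields with $H_0=0$ (itself proved by constructing resolvents), whereas you instantiate this directly via the uniform gap $D_{\mathrm{dR},s}^2\ge\pi^2$, which correctly forces $\|(s^{-1}D_{\mathrm{dR},s}\pm i)^{-1}\|\le s/\pi\to 0$.
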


To prove the theorem we shall use the following simple technical
result, which is proved by explicitly constructing the resolvent
operators for the operator in question.  For further information about
regularity for Hilbert module  operators, which is needed to guarantee
a reasonable spectral theory for these operators, see \cite{Lance}. 
\begin{lemma}
\label{lem-tech-regularity-result}
Let $\{ H_s\}_{s\in [0,1]}$ be a continuous field of Hilbert spaces
over $[0,1]$ with $H_0 {=}0$, and let $\mathcal E$ be a dense
submodule of its Hilbert $C[0,1]$-module of continuous sections. Let
$\{ D _s \} _{s \in [0,1]}$ be a family of   symmetric operators on
the   fibers of the field.  Assume that:
\begin{enumerate}[\rm (i)]

\item  If  $e\in \mathcal E$, then $e_s \in \operatorname{dom}(D_s)$  for every $s\in [0,1]$.

\item    If  $e\in \mathcal E$, then 
$ s\mapsto  D_s e_s$ is a continuous section of  $\{ H_s\}_{s\in [0,1]}$.  
 
\item For every $\delta >0$ the family $\{ D_s\}_{s\in [\delta, 1]}$ defines 
 an essentially self-adjoint and regular operator on the Hilbert $C[\delta,1]$-module of continuous sections of the restricted field  $\{ H_s\}_{s\in [\delta,1]}$, with domain the module  of all restrictions of sections in $\mathcal{E}$ to $[\delta, 1]$.
 
\end{enumerate}
Then   $\{ D_s\}_{s\in [0, 1]}$ defines an essentially self-adjoint and regular operator on the Hilbert $C[0,1]$-module of all continuous sections of $\{ H_s\}_{s\in [0,1]}$, with domain $\mathcal{E}$. \qed
\end{lemma}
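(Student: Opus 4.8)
\emph{Proof plan.}  The strategy is to reduce the statement to the standard range criterion for regularity on Hilbert modules (see \cite{Lance}): if $D$ is a symmetric operator on a Hilbert $B$-module with dense domain $\mathcal{E}$, then the closure $\overline D$ is self-adjoint and regular, and $\mathcal{E}$ is a core, as soon as the two subspaces $(D+i)\mathcal{E}$ and $(D-i)\mathcal{E}$ are dense.  Equivalently, this amounts to constructing bounded adjointable resolvents $R^{\pm}$ for the candidate operator, and that is how I would organize the proof.

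First I would record the fiberwise input coming from hypothesis (iii).  Fix $s\in(0,1]$ and apply (iii) with some $\delta<s$: the family $\{D_t\}_{t\in[\delta,1]}$ is essentially self-adjoint and regular, so its resolvent $(\overline D+i)^{-1}$ over $[\delta,1]$ is a bounded adjointable module operator of norm $\le 1$; evaluating at the point $s$ shows that $D_s$ is essentially self-adjoint on $H_s$, that $R^{\pm}_s:=(\overline D_s\pm i)^{-1}$ exist with $\|R^{\pm}_s\|\le 1$, and that $t\mapsto R^{\pm}_t\xi_t$ is norm-continuous on $[\delta,1]$ for every continuous section $\xi$.  Since $\delta>0$ was arbitrary, $s\mapsto R^{\pm}_s\xi_s$ is continuous on $(0,1]$.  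At the endpoint $s=0$ we simply set $R^{\pm}_0=0$ on $H_0=0$, and continuity there is automatic: $\|R^{\pm}_s\xi_s\|\le\|\xi_s\|\to\|\xi_0\|=0$ as $s\to 0$, precisely because $H_0=0$.  Hence the $R^{\pm}_s$ assemble into bounded adjointable operators $R^{\pm}$ on the module of continuous sections, with $(R^{+})^{*}=R^{-}$.  (One also checks $R^{+}(D+i)e=e$ for $e\in\mathcal{E}$ fiberwise, using (i), (ii), which is where hypotheses (i) and (ii) are consumed.)

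The crux is then the ``gluing'' step that upgrades the information on $[\delta,1]$ to all of $[0,1]$, i.e.\ proving that $(D+i)\mathcal{E}$ is dense.  Given a continuous section $\xi$ and $\delta>0$, choose $\chi\in C[0,1]$ with $0\le\chi\le 1$, $\chi\equiv 0$ on $[0,\delta/2]$ and $\chi\equiv 1$ on $[\delta,1]$.  By (iii) over $[\delta/2,1]$ there is $e\in\mathcal{E}$ with $\|(D+i)e-\chi\xi\|<\delta$ on $[\delta/2,1]$.  Because $\mathcal{E}$ is a $C[0,1]$-submodule and $D$ acts fiberwise (hence $C[0,1]$-linearly), we have $\chi e\in\mathcal{E}$ and $(D+i)(\chi e)=\chi\,(D+i)e$; splitting $[0,1]$ into $[0,\delta/2]$, $[\delta/2,\delta]$, $[\delta,1]$ and estimating on each piece, using again that $\sup_{s\le\delta}\|\xi_s\|\to 0$ as $\delta\to 0$ (because $\xi_0=0$), one gets $\|(D+i)(\chi e)-\xi\|\le\delta+\sup_{s\le\delta}\|\xi_s\|\to 0$.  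The identical argument with $-i$ gives density of $(D-i)\mathcal{E}$, and the criterion finishes the proof.  The main obstacle is exactly this reconciliation of the $[\delta,1]$-information with the behavior near $s=0$; the saving grace is the degeneration $H_0=0$, which makes every ``tail'' near the bad endpoint negligible, so that a single cutoff function does all the work.  A subsidiary point that needs a sentence but no real effort is that essential self-adjointness and regularity over $[\delta,1]$ descend to each fiber $H_s$, which is immediate from the adjointability of the resolvent over $[\delta,1]$.
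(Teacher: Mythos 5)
Your argument is correct and follows the route the paper indicates (the lemma is stated with proof omitted, the text noting only that it "is proved by explicitly constructing the resolvent operators"): you assemble the fiberwise resolvents from hypothesis (iii), extend them by $0$ at $s=0$ using $H_0=0$, and confirm self-adjointness and regularity via the density of $(D\pm i)\mathcal{E}$, which your cutoff argument establishes. The one point worth a sentence in a write-up is that the localization at a point $s$ of the closed operator over $[\delta,1]$ is a priori the closure of $D_s$ restricted to $\{e_s : e\in\mathcal{E}\}$, which agrees with the closure of the given symmetric $D_s$ because a symmetric extension of a self-adjoint operator is that operator.
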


\begin{proof}[Proof of Theorem~\ref{thm-curly-d-regular}]
If  $Q$ is a vertex in  $X$, then define 
\[
 \mathcal{H}(X)_Q =  \bigl \{\,
\beta  \in \mathcal{H} (X)\, \bigm | \, \text{$\beta_s  \in \Omega^{\ast}_{L^2}(X)_Q \quad \forall s\in [0, 1]$} 
\, \bigr  \} , 
\]
and also define 
\[
 \mathcal{H}_{0}(X)_{Q} = \mathcal{H}_{0}(X)  \cap \mathcal{H}(X)_Q.
\]
Then
\[
\mathcal{H}  (X) = \bigoplus_Q \mathcal{H}  (X) _Q 
\quad \text{and} \quad 
\mathcal{H}_{0} (X) = \bigoplus_Q \mathcal{H}_{0} (X) _Q  ,
\]
where the former is a direct sum decomposition of Hilbert modules, and
the latter is an algebraic direct sum according to which the operator
$\D $ decomposes as a direct sum, $\D = \oplus_Q \D _Q $. 
To show that  $ \D$ is a  regular operator, we only need to show the
same for each $\D _Q$.

Define a self-adjoint projection operator 
\[
\begin{gathered}
\mathcal{P}_Q\colon \mathcal{H}(X)_Q \longrightarrow \mathcal{H}(X)_Q
\\
\bigl ( \mathcal{P}\beta\bigr ) _s =  P_s \beta _s  ,
\end{gathered}
\]
 where $P_s$ is the orthogonal projection from $\Omega^*_{L^2}(X)_Q$ onto the finite-dimen\-sional   subspace 
 $ {A}^*(X)_Q= {A}^*(X)\cap \Omega^*_{L^2}(X)_Q $. 
 The operator $\mathcal{P}$  maps the domain $\mathcal{H}_0(X)_Q$ into itself, and thanks to Lemmas~\ref{lem-invariance-of-A-s1} and \ref{lem-invariance-of-A-s2} it  commutes with   $\D$. 

Let us  denote by
\[
\mathcal{A}(X)_Q \subseteq \mathcal{H}(X)_Q.
\] 
the range of the operator $\mathcal{P}_Q$.  It is an orthocomplemented Hilbert submodule, and indeed a finitely generated and projective module in its own right.
The restriction of $\D$ to $\mathcal{A}(X)_Q$ is the bounded and norm-continuous family  of self-adjoint operators  
\[
\begin{cases}
s^{-1} e_{sw} + s^{-1}e_{sw}^\diamond  & s > 0 \\
d_{\mathrm{JV}} + d^\diamond _{\mathrm{JV}} & s =0  ,
\end{cases}
\]
 and it is a regular and self-adjoint Hilbert module operator.

As for   the restriction of  $\D $   to the  orthogonal complement  $\mathcal{A}(X)_Q^\perp \subseteq \mathcal{H}(X)_Q$, whose domain is the orthogonal complement of the submodule $\mathcal{A}(X)_Q$ in $\mathcal{H}_0(X)_Q$, it follows immediately from the preceeding lemma that this too is a regular operator.
\end{proof}

In order to construct a $KK$-cycle we need a bit more than regularity: we also need an operator  with compact resolvent.  For this, we shall use the following equally easy but slightly stronger version of Lemma~\ref{lem-tech-regularity-result}: 

\begin{lemma}
\label{lem-tech-compact-resolvent-result}
Under the hypotheses of Lemma~\textup{\ref{lem-tech-regularity-result}}, assume in addition that:
\begin{enumerate}[\rm (i)]

\item For every $\delta >0$ the family $\{ D_s\}_{s\in [\delta, 1]}$ defines 
 an essentially self-adjoint and regular operator with compact resolvent.

\item  For every $K> 0$ there exists $\delta> 0$ so that if $s\in [0,\delta] $, then $D_s$ is bounded below by $K$ \textup{(}that is, $\|D_s e_s\| \ge K \| e_s\|$ for every $e_s \in \operatorname{dom}(D_s)$\textup{)}.
\end{enumerate}
Then   $\{ D_s\}_{s\in [0, 1]}$ defines an essentially self-adjoint and regular operator with compact resolvent on the Hilbert $C[0,1]$-module of all continuous sections of $\{ H_s\}_{s\in [0,1]}$, with domain $\mathcal{E}$. \qed
\end{lemma}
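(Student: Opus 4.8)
The plan is to obtain everything from Lemma~\ref{lem-tech-regularity-result} by a short cutoff argument concentrated near the endpoint $s=0$. The hypotheses of Lemma~\ref{lem-tech-regularity-result} are in force (its hypothesis (iii) being subsumed by the stronger hypothesis (i) of the present lemma), so that lemma already supplies essential self-adjointness and regularity: the family $\{D_s\}_{s\in[0,1]}$ with domain $\mathcal{E}$ defines an essentially self-adjoint and regular operator on the Hilbert $C[0,1]$-module $\mathcal{M}$ of continuous sections of $\{H_s\}$. Write $\mathcal{D}$ for the self-adjoint closure and $R=(\mathcal{D}+i)^{-1}$ for its resolvent, a bounded adjointable (hence automatically $C[0,1]$-linear) operator on $\mathcal{M}$. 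What remains is to show $R\in\Compacts(\mathcal{M})$, since then $(\mathcal{D}-i)^{-1}=R^{*}$ is compact as well and $\mathcal{D}$ has compact resolvent. As $\Compacts(\mathcal{M})$ is norm-closed, it suffices to produce, for each $\varepsilon>0$, a decomposition $R=T+S$ with $S$ compact and $\|T\|<\varepsilon$. I would take $T=\chi R$ and $S=(1-\chi)R$ for a scalar cutoff $\chi\in C[0,1]$ supported near $0$: hypothesis (ii) of the present lemma will make $\|\chi R\|$ small, while hypothesis (i) will make $(1-\chi)R$ compact.

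For the small piece: given $\varepsilon>0$, pick $K$ with $(K^{2}+1)^{-1/2}<\varepsilon$, then by hypothesis (ii) pick $\delta>0$ with $\|D_se_s\|\ge K\|e_s\|$ for all $s\in[0,\delta]$ and all $e_s\in\operatorname{dom}(D_s)$, and let $\chi\in C[0,1]$ satisfy $0\le\chi\le1$, $\chi\equiv1$ on $[0,\delta/2]$ and $\operatorname{supp}\chi\subseteq[0,\delta]$. For $e\in\mathcal{E}$ supported in $[0,\delta]$, symmetry of $\mathcal{D}$ gives the identity $\langle(\mathcal{D}+i)e,(\mathcal{D}+i)e\rangle=\langle\mathcal{D}e,\mathcal{D}e\rangle+\langle e,e\rangle$ in $C[0,1]$, and evaluating fibrewise and using hypothesis (ii) turns this into $\|(\mathcal{D}+i)e\|\ge(K^{2}+1)^{1/2}\|e\|$; a routine density step (multiplying graph-norm approximants from $\mathcal{E}$ by a fixed cutoff that equals $1$ on $\operatorname{supp}\chi$ and is supported in $[0,\delta]$) extends this to every $e\in\operatorname{dom}(\mathcal{D})$ supported in $[0,\delta]$. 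Since $R$ is $C[0,1]$-linear it cannot enlarge supports, so for every $\beta\in\mathcal{M}$ the vector $\chi R\beta$ is supported in $[0,\delta]$ and satisfies $(\mathcal{D}+i)(\chi R\beta)=\chi\beta$, whence $\|\chi R\beta\|\le(K^{2}+1)^{-1/2}\|\chi\beta\|$ and $\|T\|\le(K^{2}+1)^{-1/2}<\varepsilon$.

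For the compact piece, the external input I would invoke is that, for a continuous field of Hilbert spaces, the compact operators on its module of continuous sections are precisely the norm-continuous fields of fibrewise compact operators (see \cite{Lance}). By hypothesis (i) the resolvent of $\{D_s\}_{s\in[\delta/2,1]}$ is compact on the module of continuous sections over $[\delta/2,1]$; since $R$ is $C[0,1]$-linear it acts fibrewise, and on $[\delta/2,1]$ its fibres agree with those of that resolvent, so each $R_s$ with $s\ge\delta/2$ is compact on $H_s$ and $s\mapsto R_s$ is norm-continuous on $[\delta/2,1]$. Because $1-\chi$ vanishes on $[0,\delta/2]$, the field $s\mapsto(1-\chi)(s)R_s$ is then norm-continuous on all of $[0,1]$ and fibrewise compact, so $S=(1-\chi)R\in\Compacts(\mathcal{M})$. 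Consequently $\operatorname{dist}(R,\Compacts(\mathcal{M}))<\varepsilon$ for every $\varepsilon>0$, hence $R\in\Compacts(\mathcal{M})$, which finishes the proof.

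I expect the one genuinely delicate point to be the translation between compactness of a Hilbert-module operator over $C[0,1]$ and fibrewise compactness together with norm-continuity of the associated field — equivalently, the identification of $\Compacts(\mathcal{M})$ and the fact that $R$ localizes fibrewise to the fibre resolvents. This is precisely where the regularity of $\mathcal{D}$ furnished by Lemma~\ref{lem-tech-regularity-result} is used; everything else is elementary bookkeeping, with hypothesis (i) responsible for the compactness of $S$ and hypothesis (ii) for the smallness of $T$.
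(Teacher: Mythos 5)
Your argument is correct. The paper states this lemma without proof (it is offered as an "equally easy" strengthening of Lemma~\ref{lem-tech-regularity-result}, whose own proof is only described as "explicitly constructing the resolvent operators"), and your decomposition $R=\chi R+(1-\chi)R$ --- hypothesis (ii) giving $\|\chi R\|\le (K^2+1)^{-1/2}$ via the fibrewise lower bound, and hypothesis (i) giving compactness of $(1-\chi)R$ through the standard identification of the compact operators on the section module with continuous fields of fibrewise compact operators (a fact from Dixmier's theory of continuous fields of elementary $C^*$-algebras, rather than \cite{Lance} per se) --- is precisely the intended argument.
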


With this, we are finally ready to prove Theorem~\ref{thm-de-Rham-is-JV}. 

\begin{proof}[Proof of Theorem~\ref{thm-de-Rham-is-JV}]
We need  to show that the operator $\D$ has compact resolvent and is almost-equi\-variant, so that it determines a class in the equivariant group $KK_G(\C,C[0,1])$.

It follows from Lemma \ref{lemma_op}  that for $s\in (0,1]$ the operator $D_{\mathrm{dR},s} $  on $\Omega^{\ast}_{L^2}(X)_Q$ has square bounded below by 
\[
\begin{aligned}
\sum_{H\in \SAH(Q)}  w(H)^2 \frac{(1 - e^{-2sw(H)})}{2sw(H)} 
	& \geq \sum_{H\in \SAH(Q)}  w(H)^2 \frac{(1 - e^{-2w(H)})}{2w(H)}  \\
	& = \sum_{H\in \SAH(Q)}  \tfrac12 w(H)(1 - e^{-2w(H)}) ,
\end{aligned}
\]
and it follows from this  and the lemma above that   $\D $ has compact resolvent on $\mathcal{H}(X)$.

From the proof of Lemma \ref{lem-G-bounded-property-for-de-Rham}, we see that the operator family 
\[
\bigl \{ \, g\bigl (D_{\mathrm{dR},s} \bigr ) - D_{\mathrm{dR},s} \, \bigr\}_{s\in (0,1]}
\]
extends to a bounded operator on $\mathcal{H}(X)$, for every $g\in G$,  and defines a strongly continuous, locally bounded, bounded operator-valued function of $g\in G$, as required.
\end{proof}

Let us turn now to  Theorem~\ref{thm-de-Rham-has-gamma}. 
 We just showed   that $[D_{\mathrm{dR}}]{=}[D_{\mathrm{JV}}]$, and Theorem \ref{thm-JV=1} asserts that    $[D_{\mathrm{JV}}]{=}1_G$.  Therefore $[D_{\mathrm{dR}}]{=}1_G$, and so $D_{\mathrm{dR}}$ satisfies the condition (i)  in Definition~\ref{def_gamma}.  It remains to show that $D_{\mathrm{dR}}$ satisfies the condition (ii)  in Definition~\ref{def_gamma}.

\begin{proof}[Proof of Theorem~\ref{thm-de-Rham-has-gamma}]
We shall apply Theorem \ref{thm_gamma}. 
For the space $E$ we shall take the cubical complex $X$, of course. This is a cocompact model for the universal proper $G$-space. We shall use the natural non-degenerate representation of the $G$-$C^*$-algebra $C_0(X)$ on $\Omega^{\ast}_{L^2}(X)$ by pointwise multiplication on forms. 

For the algebra  $B$ we shall take the dense $G$-subalgebra of $C_0(X)$ consisting of compactly supported complex-valued functions that are smooth on each cube. It contains a cut-off function of $X$. For each $b\in B$, we have
\[
\begin{aligned}
\bigl [D_{\mathrm{dR}}, g(b)\bigr ]
	 & = \bigl [d_{ w} + e_{w} + d_{w}^\diamond + e_{ w}^\diamond, g(b)\bigr ]  \\
	& = \bigl [d_{ w} + d_{w}^\diamond, g(b)\bigr ]  + \bigl [e_{w} + e_{ w}^\diamond, g(b)\bigr ]  \\
	& = g(c(b))  + \bigl [e_{w} + e_{ w}^\diamond, g(b)\bigr ] ,
\end{aligned}
\]
where $c(b)$ denotes Clifford multiplication by the gradient of $b$ in each cube, which is a bounded operator,  uniformly bounded in $g$, and supported in the $g$-translate of the support of $b$. The operators $e_{w}, e_{w}^\diamond$ and $g(b)$ respect the decomposition \eqref{eq-normal-cube-decomp} of the Hilbert space $\Omega^{\ast}_{L^2}(X)$.  Moreover $b$ is non-zero on only finitely many blocks $\Omega^{\ast}_{L^2}(X)_Q$, and on these blocks $e_w$ and $e_{w}^\diamond$ are bounded. From this, we see that the commutator $[D,g(b)]$ extends to a bounded operator on $\mathcal{H}$ whose support is contained in $gK$ where $K$ is the union of cubes which intersect the support of $b$. 

In order to apply Theorem~\ref{thm_gamma}, it remains to show that the commutator 
\[
[e_{w}, g(b)] 
\]
is uniformly bounded in $g$. For this, choose $C$  so  that
\[
\bigl |b(x) - b(y)\bigr | \leq C\cdot \operatorname{distance}(x, y)
\qquad \forall x,y\in X.
\]
From the definition of $e_w$, we see that for all $\beta\in \Omega^\ast(C)$, 
\[
\begin{aligned}
\| [e_{w}, g(b)] \beta  \|^2 
	& \le \sum_{H\in \SAH(C)} C^2 \cdot \bigl  \|xw(H)e^{-w(H)x}\bigr  \|_{L^2(0,1)}^2   \cdot  \| \beta \|^2 \\
	&\le \dim(X) \cdot C^2  \cdot \sup\, \{\,  xe^{-x} :  x\geq0 \, \}  \cdot \| \beta \|^2 \\
	& \le \dim(X) \cdot C^2 \cdot \| \beta \|^2.
\end{aligned}
\]
So $[e_{w}, g(b)]$ is indeed uniformly bounded in $g\in G$, as required. 
\end{proof}

To conclude the paper, let us repeat the overall argument as it was
presented in Section~1.  Suppose $G$ is a second countable, locally
compact group acting on a bounded geometry $CAT(0)$-cubical space $X$
by automorphisms.  Theorems~\ref{thm-de-Rham-is-JV} and
\ref{thm-de-Rham-has-gamma}, combined with the Theorem~\ref{thm-JV=1},
imply that the identity element of Kasparov's representation ring
$R(G)$ is represented by a cycle with Property $(\gamma)$.  It follows
therefore from Theorem~\ref{thm_BCC} that the Baum--Connes conjecture
with coefficients holds for $G$.

\bibliography{Refs}
\bibliographystyle{alpha}

\end{document}